\documentclass[a4paper, 12pt]{amsart}%
\usepackage{amsmath}
\usepackage{graphicx}%
\usepackage{amsfonts}%
\usepackage{amssymb}

\newtheorem{theorem}{Theorem}[section]

\newtheorem{proposition}[theorem]{Proposition}
\newtheorem{corollary}[theorem]{Corollary}
\theoremstyle{definition}
\newtheorem{definition}[theorem]{Definition}
\newtheorem{remark}[theorem]{Remark}
\newtheorem{example}[theorem]{Example}

\begin{document}
\date{2011-8-15}
\title[Disjointness and unique ergodicity]{Disjointness and unique ergodicity of C*-dynamical systems}
\author{Rocco Duvenhage and Anton Str\"{o}h}
\address{Department of Physics\\
University of Pretoria\\
Pretoria 0002\\
South Africa}
\email{rocco.duvenhage@up.ac.za}
\address{Department of Mathematics and Applied Mathematics\\
University of Pretoria\\
Pretoria 0002\\
South Africa}
\email{anton.stroh@up.ac.za}
\subjclass[2000]{46L55}
\keywords{C*-dynamical systems; W*-dynamical systems; joinings; disjointness; unique
ergodicity; relative unique ergodicity}

\begin{abstract}
We study an ergodic theorem for disjoint C*-dynamical systems, where
disjointness here is a noncommutative version of the concept introduced by
Furstenberg for classical dynamical systems. This is applied to W*-dynamical
systems. We also consider specific examples of disjoint C*-dynamical and
W*-dynamical systems, including for actions of other groups than $\mathbb{Z}$.
Unique ergodicity and unique ergodicity relative to the fixed point algebra
are closely related to disjointness, and are used to give examples of disjoint
C*-dynamical systems.

\end{abstract}
\maketitle

\section{Introduction}

In \cite{D, D2, D3} we studied joinings of W*-dynamical systems, generalizing
aspects of the classical case \cite{F67, G}. In particular disjointness and
some of its implications were studied. This included the relative case of
disjointness over a common factor of the W*-dynamical systems. Here we
continue studying disjointness and its consequences, in the context of
C*-dynamical systems possessing an invariant state. In the process we
illustrate how joining techniques can be applied to C*- and W*-dynamical systems.

In particular we derive an ergodic theorem for two disjoint C*-dynamical
systems. This theorem is then applied to W*-dynamical systems. A number of
results on ergodic theorems for C*- and W*-dynamical systems have appeared
recently. See for example \cite{NSZ}, \cite{D4}, \cite{BDS}, \cite{AET} and
\cite[Section 4]{EK}. The ergodic theorems we obtain here for pairs of
disjoint C*- and W*-dynamical systems, fit into this broader research effort,
and provide evidence that joinings are useful in this field.

A number of examples of disjoint systems illustrating these results, including
for the relative case, are constructed using $\mathbb{Z}$-actions on group von
Neumann algebras, as well as $\mathbb{R}$ and $\mathbb{R}^{2}$ actions on
quantum tori and tensor products of quantum tori.

A concept closely related to disjointness is unique ergodicity relative to the
fixed point algebra of a C*-dynamical system. Abadie and Dykema introduced the
latter notion for $\mathbb{Z}$-actions in their paper \cite{AD} as a
generalization of unique ergodicity. Certain aspects of unique ergodicity in
the context of C*-dynamical systems have already been explored by Avitzour
\cite{A1, A2} and Longo and Peligrad \cite{LP} for general group actions. In
classical ergodic theory the notion of unique ergodicity goes back to the
papers \cite{KB} and \cite{O}. Here we consider unique ergodicity relative to
the fixed point algebra for actions of $\sigma$-compact locally compact
amenable groups, and use it to obtain examples of disjoint C*-dynamical systems.

The main discussion of disjointness and its implications appears in Section 3.
Unique ergodicity and unique ergodicity relative to the fixed point algebra,
and their connection to disjointness, are studied in Sections 4 and 5
respectively. In particular we extend \cite[Theorem 3.2]{AD} to the more
general groups as mentioned above. However, we first discuss some of the
definitions and tools we use in the paper, in particular background regarding
the quantum torus and the Bochner integral, in the next section.

\section{Basic definitions and tools}

Here we discuss some of the definitions and an example that we will use in the
rest of the paper and also briefly discuss an important tool we need to study
unique ergodicity, namely the Bochner integral. We start with the latter.

The Bochner integral is described carefully in \cite[Chapter II]{DU} and we
refer the reader to this source for the standard definitions and results
regarding this topic. We only discuss some facts specific to our needs. The
Bochner integral is designed to integrate a $\rho$-measurable Banach space
valued function over a measure space $\left(  \Lambda,\Sigma,\rho\right)  $,
and the function is called Bochner integrable when its integral exists. In our
case the Banach space will be a unital C*-algebra $A$ and the measure space
will be finite.

First of all we note that if $\Lambda$ is a compact topological space and
$\Sigma$ its Borel $\sigma$-algebra, then every continuous function
$f:\Lambda\rightarrow X$ with $X$ a Banach space, is $\rho$-measurable. This
follows from the fact that $f(\Lambda)$ is a compact metric space and
therefore separable, which allows us to apply Pettis's measurability theorem
\cite{Pet} (see also \cite[Theorem II.1.2]{DU}) to conclude that $f$ is $\rho$-measurable.

Next consider a Bochner integrable function $f:\Lambda\rightarrow A$ where $A$
is a C*-algebra. Then
\begin{equation}
\left(  \int_{\Lambda}fd\rho\right)  ^{\ast}=\int_{\Lambda}f^{\ast}
d\rho\tag{2.1}%
\end{equation}
where $f^{\ast}$ is defined pointwise, i.e. $f^{\ast}(a):=f(a)^{\ast}$. This
is the case since by the definition of the Bochner integral there is a
sequence of simple functions $f_{n}:\Lambda\rightarrow A$ such that
$\lim_{n\rightarrow\infty}\int_{\Lambda}\left\|  f_{n}-f\right\|  d\rho=0$,
where here the norm of $A$ is composed with the function $f_{n}-f$, and then
$\int_{\Lambda}fd\rho=\lim_{n\rightarrow\infty}\int_{\Lambda}f_{n}d\rho$. So
clearly $\lim_{n\rightarrow\infty}\int_{\Lambda}\left\|  f_{n}^{\ast}-f^{\ast
}\right\|  d\rho=0$ which means $f^{\ast}$ is Bochner integrable and
$\int_{\Lambda}f^{\ast}d\rho=\lim_{n\rightarrow\infty}\int_{\Lambda}%
f_{n}^{\ast}d\rho$ from which (2.1) follows, since the $f_{n}$ are simple.

A similar argument can be used to prove many properties of the Bochner
integral. For example, if $G$ is a locally compact group with right Haar
measure $\rho$, and $f:\Lambda\rightarrow X$ is Bochner integrable where
$\Lambda\subset G$ is compact and $X$ a Banach space, then
\begin{equation}
\int_{\Lambda}f(gh)dg=\int_{\Lambda h}f(g)dg \tag{2.2}%
\end{equation}
for every $h\in G$, and where we wrote $dg$ as shorthand for $d\rho(g)$, as we
will do often in the rest of the paper when integrating with respect to a
right Haar measure. We will use (2.2) in Section 5.

Now we define the dynamical systems that we will be working with.

\begin{definition}
A \emph{C*-dynamical system} $\left(  A,\alpha\right)  $ consists of a unital
C*-algebra $A$ and an action $\alpha$ of a group $G$ on $A$ as $\ast
$-automorphisms. We use the notation $\alpha_{g}$ for an element of the group
action. If $B$ is an $\alpha$-invariant C*-subalgebra of $A$ containing the
unit of $A$, in other words $\alpha_{g}\left(  B\right)  =B$ for all $g\in G$,
we can define $\beta_{g}:=\alpha_{g}|_{B}$ to obtain a C*-dynamical system
$\left(  B,\beta\right)  $ called a \emph{factor} of $\left(  A,\alpha\right)
$. The \emph{fixed point algebra} of a C*-dynamical system $\left(
A,\alpha\right)  $ is defined as
\[
A^{\alpha}:=\left\{  a\in A:\alpha_{g}(a)=a\text{ for all }g\in G\right\}
\text{.}%
\]
Note that this gives an example of a factor of $\left(  A,\alpha\right)  $,
namely $\left(  A^{\alpha},\text{id}\right)  $ where id is the identity map.
\end{definition}

The ``term'' factor in the noncommutative context already appeared in
\cite{A1}, and generalizes the concept of a factor in classical topological dynamics.

In any situation in the rest of the paper involving more than one C*-dynamical
system, all the systems will be assumed to make use of actions of the same
group $G$. A simple but important construction in our work is the tensor
product of two C*-dynamical systems $\left(  A,\alpha\right)  $ and $\left(
B,\beta\right)  $, namely the C*-dynamical system $\left(  A\otimes
_{m}B,\alpha\otimes_{m}\beta\right)  $ where $A\otimes_{m}B$ is the maximal
C*-algebraic tensor product of $A$ and $B$, and $\left(  \alpha\otimes
_{m}\beta\right)  _{g}:=\alpha_{g}\otimes_{m}\beta_{g}$ for all $g\in G$.

We will in fact focus on a specific type of C*-dynamical system defined as follows:

\begin{definition}
A C*-dynamical system $\left(  A,\alpha\right)  $ is called \emph{amenable} if
$G$ is an amenable $\sigma$-compact locally compact group with right Haar
measure $\rho$ and the function $\Lambda\rightarrow A:g\mapsto\alpha_{g}(a)$
is $\rho$-measurable for every compact subset $\Lambda$ of $G$ and every $a\in
A$.
\end{definition}

Remember that an amenable $\sigma$-compact locally compact group $G$ has a
right F\o lner sequence $\left(  \Lambda_{n}\right)  $ consisting of compact
subsets of $G$ with strictly positive right Haar measure. This means that
\begin{equation}
\lim_{n\rightarrow\infty}\frac{\left|  \Lambda_{n}\bigtriangleup\left(
\Lambda_{n}g\right)  \right|  }{\left|  \Lambda_{n}\right|  }=0 \tag{2.3}%
\end{equation}
for every $g\in G$, where we use the notation $\left|  \Lambda\right|
=\rho(\Lambda)$. If in (2.3) we were to replace $\Lambda_{n}g$ by
$g\Lambda_{n}$ and we work in terms of the left Haar measure, then $\left(
\Lambda_{n}\right)  $ is rather called a \emph{left} F\o lner sequence. One
can even choose each $\Lambda_{n}$ to be symmetric, i.e. $\Lambda_{n}%
^{-1}=\Lambda_{n}$. Note that this implies that when the group is unimodular
(i.e. when $\rho$ is also a left Haar measure) we always have a right F\o lner
sequence which is also a left F\o lner sequence, namely any F\o lner sequence
consisting of symmetric sets. Of course a F\o lner sequence need not be
symmetric in order to be both a right and left F\o lner sequence, for example
the sequence $\Lambda_{n}=\left\{  1,...,n\right\}  $ for the group
$\mathbb{Z}$. All these facts (and more) regarding F\o lner sequences can be
found in \cite[Theorems 1 and 2]{E2} and \cite[Theorem 4]{E1}, and will be
used in this paper.

In an amenable C*-dynamical system the function $g\mapsto\alpha_{g}(a)$ is
Bochner integrable on compact subsets of $G$ by \cite[Theorem II.2.2]{DU},
since $\left\|  \alpha_{g}(a)\right\|  =\left\|  a\right\|  $ is constant. In
particular $g\mapsto\alpha_{g}(a)$ is Bochner integrable on every set
$\Lambda_{n}$ from the F\o lner sequence.

In the rest of the paper the notation $\left(  \Lambda_{n}\right)  $ will
refer to a right F\o lner sequence consisting of compact subsets, each with
strictly positive right Haar measure, of the relevant group $G$.

A central notion in our work will be that of an invariant state:

\begin{definition}
Given a C*-dynamical system $\left(  A,\alpha\right)  $, a state $\mu$ on $A$
is called an \emph{invariant} state of $\left(  A,\alpha\right)  $, or
alternatively an $\alpha$\emph{-invariant} state, if $\mu\circ\alpha_{g}=\mu$
for all $g\in G$. In this case we say that $\mathbf{A}=\left(  A,\alpha
,\mu\right)  $ is a \emph{state preserving C*-dynamical system}.
\end{definition}

We end this section with a basic and standard example that we will use a
number of times in the paper, namely the quantum torus.

\begin{example}
Our discussion is based on the exposition of the quantum torus in
\cite[Section 5.5]{W}, although our notation and conventions are somewhat
different. We consider the classical torus $\mathbb{T}^{2}=\mathbb{R}%
^{2}/\mathbb{Z}^{2}$ and the Hilbert space $H:=L^{2}(\mathbb{T}^{2})$ with
respect to the normalized Haar measure on $\mathbb{T}^{2}$. With every
$\theta\in\mathbb{R}$ we associate a C*-algebraic quantum torus $A_{\theta}$
which is the C*-subalgebra of $B(H)$ generated by the set of operators
$\left\{  u,v\right\}  \subset B(H)$ defined by%
\[
\left(  uf\right)  (x,y):=e^{2\pi ix}f\left(  x,y+\theta/2\right)
\]
and
\[
\left(  vf\right)  \left(  x,y\right)  :=e^{2\pi iy}f\left(  x-\theta
/2,y\right)
\]
for all $f\in H$ and $\left(  x,y\right)  \in\mathbb{T}^{2}$. We will also
consider the von Neumann algebraic quantum torus $M_{\theta}=A_{\theta
}^{\prime\prime}$. We note that
\[
uv=e^{2\pi i\theta}vu
\]
so in general $A_{\theta}$ and $M_{\theta}$ are not abelian. We can set
$\Omega:=1\in H$ which is then a cyclic vector for $A_{\theta}$ and
$M_{\theta}$, and it can be verified using harmonic analysis on $\mathbb{T}%
^{2}$ that it is also separating for $A_{\theta}$ and $M_{\theta}$. Hence we
can define a faithful trace Tr on both $A_{\theta}$ and $M_{\theta}$ by
\[
\text{Tr}(a):=\left\langle \Omega,a\Omega\right\rangle
\]
for all $a\in M_{\theta}$. Of course, Tr is normal on $M_{\theta}$. We will
call Tr the \emph{canonical trace} of the quantum torus.

A very simple $\mathbb{R}^{2}$ action can be defined on both $A_{\theta}$ and
$M_{\theta}$ as follows: Set%
\[
T_{s,t}(x,y):=\left(  x+s,y+t\right)  \in\mathbb{T}^{2}%
\]
for all $\left(  x,y\right)  \in\mathbb{T}^{2}$ and then define
\[
U_{s,t}:H\rightarrow H:f\mapsto f\circ T_{s,t}%
\]
for all $\left(  s,t\right)  \in\mathbb{R}^{2}$. One can then check that this
leads to a well defined action $\tau$ of $\mathbb{R}^{2}$ as $\ast
$-automorphisms on both $A_{\theta}$ and $M_{\theta}$ defined by
\[
\tau_{s,t}(a):=U_{s,t}aU_{s,t}^{\ast}%
\]
for all $a$ in $A_{\theta}$ or $M_{\theta}$, and all $\left(  s,t\right)
\in\mathbb{R}^{2}$. Furthermore, Tr is an invariant state of this dynamics in
both the C*-algebraic and von Neumann algebraic cases. It can be shown that
\[
\mathbb{R}^{2}\rightarrow A_{\theta}:\left(  s,t\right)  \mapsto\tau_{s,t}(a)
\]
is norm continuous for every $a\in A_{\theta}$. So $\left(  A_{\theta}%
,\tau,\text{Tr}\right)  $ is an amenable state preserving C*-dynamical system.
A useful fact is that
\begin{equation}
\tau_{s,t}(u)=e^{2\pi is}u\text{ \ and \ }\tau_{s,t}(v)=e^{2\pi it}v \tag{2.4}%
\end{equation}
for all $\left(  s,t\right)  \in\mathbb{R}^{2}$. We will also consider
variations on this dynamics in the rest of the paper. The C*-algebraic case
will appear in Sections 4 and 5, while the von Neumann algebraic case will
appear in Section 3.

We note that at least in the case where $\theta$ is irrational, $A_{\theta}$
is nuclear. This follows for example from the fact that $A_{\theta}$ can be
written as a crossed product of the abelian (and therefore nuclear) C*-algebra
$C(\mathbb{T})$ by an action of the amenable group $\mathbb{Z}$ as explained
in \cite[Theorem VI.1.4 and Example VIII.1.1]{Dav}, combined with the fact
that such crossed products are nuclear \cite[Proposition 2.1.2]{RS}.
Nuclearity of $A_{\theta}$ will be useful for us when discussing certain
examples in Section 5.
\end{example}

\section{Disjointness}

Disjointness of C*-dynamical systems was first considered by Avitzour in
\cite{A2}, extending the concept of disjointness in topological dynamical
systems \cite[Part II]{F67}. In this section however our approach is from the
ergodic theory point of view, in that we define disjointness of a pair of
state preserving C*-dynamical systems in terms of invariant states on a tensor
product of the pair, similar to \cite{D, D2, D3}. It is based on the idea of
disjointness in classical ergodic theory, also originating in Furstenberg's
paper \cite{F67}, and treated extensively in \cite{G}. In Section 5 we will
also consider disjointness of certain C*-dynamical systems (without specified
invariant states), but still in terms of invariant states, rather than the
topological version of Avitzour.

\begin{definition}
Let $\mathbf{A}=\left(  A,\alpha,\mu\right)  $ and $\mathbf{B}=\left(
B,\beta,\nu\right)  $ be state preserving C*-dynamical systems. A
\emph{joining} of $\mathbf{A}$ and $\mathbf{B}$ is an invariant state $\omega$
of $\left(  A\otimes_{m}B,\alpha\otimes_{m}\beta\right)  $ such that
$\omega\left(  a\otimes1\right)  =\mu(a)$ and $\omega\left(  1\otimes
b\right)  =\nu(b)$ for all $a\in A$ and $b\in B$. The set of all joinings of
$\mathbf{A}$ and $\mathbf{B}$ is denoted by $J(\mathbf{A},\mathbf{B})$. If
$J(\mathbf{A},\mathbf{B})=\{\mu\otimes_{m}\nu\}$, then $\mathbf{A}$ and
$\mathbf{B}$ are called \emph{disjoint}. More generally we can consider a
factor $\left(  R,\rho\right)  $ of $\left(  A\otimes_{m}B,\alpha\otimes
_{m}\beta\right)  $, and a $\rho$-invariant state $\psi$ on $R$ which has at
least one extension to a joining of $\mathbf{A}$ and $\mathbf{B}$. So we
obtain a state preserving C*-dynamical system $\mathbf{R}=\left(  R,\rho
,\psi\right)  $. Denote by $J_{\mathbf{R}}\left(  \mathbf{A},\mathbf{B}%
\right)  $ the subset of elements $\omega$ of $J\left(  \mathbf{A}%
,\mathbf{B}\right)  $ such that $\omega|_{R}=\psi$. If $J_{\mathbf{R}}\left(
\mathbf{A},\mathbf{B}\right)  $ contains exactly one element, then we say that
$\mathbf{A}$ and $\mathbf{B}$ are \emph{disjoint with respect to} $\mathbf{R}$.
\end{definition}

In this definition one can equivalently define a joining of $\mathbf{A}$ and
$\mathbf{B}$ in terms of the algebraic tensor product $\left(  A\odot
B,\alpha\odot\beta\right)  $ instead of $\left(  A\otimes_{m}B,\alpha
\otimes_{m}\beta\right)  $, since a state on the algebraic tensor product can
be extended to a state on the maximal C*-algebraic tensor product; see for
example \cite[Proposition 4.1]{D2}. This fact is used later on. In the rest of
this section, the notation $\mathbf{A}$, $\mathbf{B}$ and $\mathbf{R}$ will
refer to triples as in Definition 3.1, and we also use the notation
$\mathbf{F}$ for the triple $\left(  F,\varphi,\lambda\right)  $, with the
symbol $G$ for the group always implied.

We also need the following weaker concept:

\begin{definition}
Let $A$ and $B$ be unital C*-algebras with states $\mu$ and $\nu$
respectively. A \emph{coupling} of the pairs $\left(  A,\mu\right)  $ and
$\left(  B,\nu\right)  $ is a state $\kappa$ on $A\otimes_{m}B$ such that
$\kappa\left(  a\otimes1\right)  =\mu(a)$ and $\kappa\left(  1\otimes
b\right)  =\nu(b)$ for all $a\in A$ and $b\in B$. If furthermore $\psi$ is a
state on a C*-subalgebra $R$ of $A\otimes_{m}B$ such that $\kappa|_{R}=\psi$,
then we call $\kappa$ a \emph{coupling} of $\left(  A,\mu\right)  $ and
$\left(  B,\nu\right)  $ \emph{with respect to} $\left(  R,\psi\right)  $.
\end{definition}

The basic result of this section is the following ergodic theorem, which can
be viewed as a generalization of \cite[Proposition 2.3]{D2}.

\begin{theorem}
Let $\mathbf{A}$ and $\mathbf{B}$ be state preserving amenable C*-dynamical
systems which are disjoint with respect to $\mathbf{R}$. Let $\left(
\kappa_{n}\right)  $ be a sequence of couplings of $\left(  A,\mu\right)  $
and $\left(  B,\nu\right)  $ with respect to $\left(  R,\psi\right)  $. Then
\begin{equation}
\lim_{n\rightarrow\infty}\frac{1}{\left|  \Lambda_{n}\right|  }\int
_{\Lambda_{n}}\kappa_{n}\circ\left(  \alpha_{g}\otimes_{m}\beta_{g}\right)
(c)dg=\omega(c) \tag{3.1}%
\end{equation}
for all $c\in A\otimes_{m}B$, where $\omega$ is the unique element of
$J_{\mathbf{R}}\left(  \mathbf{A},\mathbf{B}\right)  $. Alternatively, instead
of assuming that $g\mapsto\alpha_{g}(a)$ and $g\mapsto\beta_{g}(b)$ are $\rho
$-measurable on compact subsets of $G$ for all $a\in A$ and $b\in B$, we can
simply assume that for every $n$ the function $g\mapsto\kappa_{n}\circ\left(
\alpha_{g}\otimes_{m}\beta_{g}\right)  (a\otimes b)$ is measurable on compact
subsets of $G$ for all $a\in A$ and $b\in B$, and then the result still holds.
\end{theorem}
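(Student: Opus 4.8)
The plan is to show that every weak* cluster point of the sequence of averages in (3.1) is an element of $J_{\mathbf{R}}(\mathbf{A},\mathbf{B})$; since the latter is a singleton by hypothesis, this forces convergence to $\omega$. First I would fix the averaged functionals
\[
\omega_{n}:=\frac{1}{\left|\Lambda_{n}\right|}\int_{\Lambda_{n}}\kappa_{n}\circ\left(\alpha_{g}\otimes_{m}\beta_{g}\right)dg,
\]
where the integral is a Bochner integral in the dual $\left(A\otimes_{m}B\right)^{\ast}$; one first checks this makes sense, either directly from the measurability hypothesis on $g\mapsto\kappa_{n}\circ(\alpha_{g}\otimes_{m}\beta_{g})(a\otimes b)$ together with boundedness ($\left\|\kappa_{n}\circ(\alpha_{g}\otimes_{m}\beta_{g})\right\|=1$), or in the first formulation by composing $\kappa_{n}$ with the Bochner-integrable $G$-orbit maps $g\mapsto\alpha_{g}(a)\otimes\beta_{g}(b)$ and extending to $A\otimes_{m}B$ by density and norm-boundedness. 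Each $\omega_{n}$ is a state: positivity and $\omega_{n}(1)=1$ pass through the integral. By weak* compactness of the state space of $A\otimes_{m}B$, the sequence $(\omega_{n})$ has cluster points, and it suffices to identify them.

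The key computation is that any weak* cluster point $\omega$ of $(\omega_{n})$ lies in $J_{\mathbf{R}}(\mathbf{A},\mathbf{B})$. Marginals: for $a\in A$ we have $\kappa_{n}\circ(\alpha_{g}\otimes_{m}\beta_{g})(a\otimes1)=\kappa_{n}(\alpha_{g}(a)\otimes1)=\mu(\alpha_{g}(a))=\mu(a)$ by $\alpha$-invariance of $\mu$, so $\omega_{n}(a\otimes1)=\mu(a)$ for all $n$, hence $\omega(a\otimes1)=\mu(a)$; similarly $\omega(1\otimes b)=\nu(b)$. Restriction to $R$: since $(R,\rho)$ is a factor of $(A\otimes_{m}B,\alpha\otimes_{m}\beta)$, for $r\in R$ we get $(\alpha_{g}\otimes_{m}\beta_{g})(r)=\rho_{g}(r)\in R$, so $\kappa_{n}\circ(\alpha_{g}\otimes_{m}\beta_{g})(r)=\psi(\rho_{g}(r))=\psi(r)$ using $\kappa_{n}|_{R}=\psi$ and $\rho$-invariance of $\psi$; thus $\omega_{n}|_{R}=\psi$ and so $\omega|_{R}=\psi$. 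Invariance: this is where the F\o lner property enters. For $h\in G$ and $c=a\otimes b$, the change of variables (2.2) gives
\[
\frac{1}{\left|\Lambda_{n}\right|}\int_{\Lambda_{n}}\kappa_{n}\circ(\alpha_{gh}\otimes_{m}\beta_{gh})(c)\,dg=\frac{1}{\left|\Lambda_{n}\right|}\int_{\Lambda_{n}h}\kappa_{n}\circ(\alpha_{g}\otimes_{m}\beta_{g})(c)\,dg,
\]
and the difference between this and $\omega_{n}(c)$ is bounded in absolute value by $\frac{\left|\Lambda_{n}\triangle(\Lambda_{n}h)\right|}{\left|\Lambda_{n}\right|}\left\|c\right\|$, which tends to $0$ by (2.3). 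Hence $\omega_{n}\bigl((\alpha_{h}\otimes_{m}\beta_{h})(c)\bigr)-\omega_{n}(c)\to0$ for all $h$ and all $c$ in the algebraic tensor product, and by norm density and uniform boundedness this passes to all $c\in A\otimes_{m}B$; taking a cluster point yields $\omega\circ(\alpha_{h}\otimes_{m}\beta_{h})=\omega$. Therefore $\omega\in J_{\mathbf{R}}(\mathbf{A},\mathbf{B})$, which is a singleton, so $\omega_{n}\to\omega$ in the weak* topology, i.e. (3.1) holds.

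The main obstacle is the careful handling of the Bochner integral: ensuring $g\mapsto\kappa_{n}\circ(\alpha_{g}\otimes_{m}\beta_{g})(c)$ is integrable for \emph{every} $c\in A\otimes_{m}B$ (not just elementary tensors), and that the averages $\omega_{n}$, a priori defined as weak* integrals, really are states and interact correctly with the change of variables (2.2). In the first formulation one must verify that composing the bounded linear functional $\kappa_{n}$ with the Bochner-integrable orbit map $g\mapsto\alpha_{g}(a)\otimes\beta_{g}(b)$ in $A\otimes_{m}B$ yields a Bochner (equivalently, since scalar-valued, Lebesgue) integrable function, then extend from the dense $\ast$-subalgebra $A\odot B$ to $A\otimes_{m}B$ using $\left\|\kappa_{n}\circ(\alpha_{g}\otimes_{m}\beta_{g})\right\|\le1$ uniformly in $g$ and $n$; in the alternative formulation this is hypothesized directly for elementary tensors and again extended by density. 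Once the objects $\omega_{n}$ are in hand and the estimate $\left|\omega_{n}((\alpha_{h}\otimes_{m}\beta_{h})c)-\omega_{n}(c)\right|\le\frac{\left|\Lambda_{n}\triangle(\Lambda_{n}h)\right|}{\left|\Lambda_{n}\right|}\left\|c\right\|$ is established, the rest is the routine cluster-point argument against the singleton $J_{\mathbf{R}}(\mathbf{A},\mathbf{B})$ described above.
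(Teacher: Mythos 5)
Your proposal is correct and follows essentially the same route as the paper: define the averaged states $\omega_{n}$, verify measurability, show every weak* cluster point has the right marginals, restricts to $\psi$ on $R$, and is invariant via the F\o lner estimate, and then invoke the uniqueness of the element of $J_{\mathbf{R}}(\mathbf{A},\mathbf{B})$ to upgrade cluster-point identification to weak* convergence. The only difference is that you spell out the invariance computation that the paper delegates to the argument of an earlier cited proposition.
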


\begin{proof}
First note that from the definition of $\rho$-measurability \cite[Definition
II.1.1]{DU} it follows that $\Lambda_{n}\rightarrow A\otimes_{m}%
B:g\mapsto\alpha_{g}(a)\otimes\beta_{g}(b)$ is $\rho$-measurable for all $a\in
A$ and $b\in B$, and therefore weakly $\rho$-measurable, so in particular
$g\mapsto\kappa_{n}\left(  \alpha_{g}(a)\otimes\beta_{g}(b)\right)  $ is
measurable on $\Lambda_{n}$, which also follows from the alternative
assumption in the theorem. From Lebesgue's dominated convergence theorem it
then follows that $g\mapsto\kappa_{n}\circ\left(  \alpha_{g}\otimes_{m}%
\beta_{g}\right)  (c)$ is measurable on $\Lambda_{n}$ for all $c\in
A\otimes_{m}B$, which means that the integrals in (3.1) indeed exist.

We define a sequence of states $\left(  \omega_{n}\right)  $ on $A\otimes
_{m}B$ by
\[
\omega_{n}(c):=\frac{1}{\left|  \Lambda_{n}\right|  }\int_{\Lambda_{n}}%
\kappa_{n}\circ\left(  \alpha_{g}\otimes_{m}\beta_{g}\right)  (c)dg
\]
which then has a cluster point $\omega^{\prime}$ in the weak* topology in the
set of all states on $A\otimes_{m}B$. By an argument similar to that in the
proof of \cite[Proposition 2.3]{D2}, it follows from eq. (2.3) that
$\omega^{\prime}\in J(\mathbf{A},\mathbf{B})$. It is also easy to check, using
the $\rho$-invariance of $\psi$, that $\omega_{n}|_{R}=\psi$, hence
$\omega^{\prime}|_{R}=\psi$. So by the assumed disjointness, $\omega^{\prime
}=\omega$. Therefore $\omega$ is the unique cluster point of $\left(
\omega_{n}\right)  $ in the weak* topology, from which we conclude that
$\left(  \omega_{n}\right)$ converges to $\omega$ in the weak*
topology, proving the theorem.
\end{proof}

This theorem is quite abstract, but has a number of interesting consequences
as we will see. To begin to understand its meaning, and for later reference,
we state the following special case explicitly.

\begin{corollary}
Let $\mathbf{A}$ and $\mathbf{B}$ be disjoint state preserving amenable
C*-dynamical systems, and $\left(  \kappa_{n}\right)  $ a sequence of
couplings of $\left(  A,\mu\right)  $ and $\left(  B,\nu\right)  $. Then
\[
\lim_{n\rightarrow\infty}\frac{1}{\left|  \Lambda_{n}\right|  }\int
_{\Lambda_{n}}\kappa_{n}\circ\left(  \alpha_{g}\otimes\beta_{g}\right)
(c)dg=\mu\otimes_{m}\nu(c)
\]
for all $c\in A\otimes_{m}B$. Again we can also use the alternative
measurability assumptions given in Theorem 3.3
\end{corollary}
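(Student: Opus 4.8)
The plan is to obtain this as the special case of Theorem 3.3 in which $\mathbf{R}$ is the trivial factor. Explicitly, I would take $R:=\mathbb{C}1$, the copy of the scalars inside $A\otimes_{m}B$, let $\rho$ be the identity map on $R$, and let $\psi$ be the unique state on $R$ (so $\psi(\lambda 1)=\lambda$). Since every $\ast$-automorphism fixes the unit, $\mathbb{C}1$ is $(\alpha\otimes_{m}\beta)$-invariant and contains the unit of $A\otimes_{m}B$, so $(R,\rho)=(\mathbb{C}1,\mathrm{id})$ is a factor of $(A\otimes_{m}B,\alpha\otimes_{m}\beta)$ in the sense of Definition 3.1; and $\mu\otimes_{m}\nu$ is a joining of $\mathbf{A}$ and $\mathbf{B}$ restricting to $\psi$, so $\psi$ has an extension to a joining as required. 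Hence $\mathbf{R}:=(R,\rho,\psi)$ is a legitimate choice of triple.

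Next I would note that every state on $A\otimes_{m}B$ restricts to $\psi$ on $R$, simply by unitality. In particular $J_{\mathbf{R}}(\mathbf{A},\mathbf{B})=J(\mathbf{A},\mathbf{B})$, and by the assumed disjointness of $\mathbf{A}$ and $\mathbf{B}$ this set is the singleton $\{\mu\otimes_{m}\nu\}$. Thus $\mathbf{A}$ and $\mathbf{B}$ are disjoint with respect to $\mathbf{R}$, and $\omega:=\mu\otimes_{m}\nu$ is the unique element of $J_{\mathbf{R}}(\mathbf{A},\mathbf{B})$. For the same reason, each coupling $\kappa_{n}$ of $(A,\mu)$ and $(B,\nu)$ is automatically a coupling with respect to $(R,\psi)$, since $\kappa_{n}|_{R}=\psi$ holds trivially.

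With these identifications in place, Theorem 3.3 applies verbatim and its conclusion (3.1) becomes exactly the asserted limit $\frac{1}{|\Lambda_{n}|}\int_{\Lambda_{n}}\kappa_{n}\circ(\alpha_{g}\otimes_{m}\beta_{g})(c)\,dg\to\mu\otimes_{m}\nu(c)$; the two alternative measurability hypotheses also carry over unchanged. I do not expect any genuine obstacle: the only point that needs (an entirely routine) verification is that the trivial factor really does satisfy all the requirements of Definition 3.1, which is what the first paragraph above records.
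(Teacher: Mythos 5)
Your proposal is correct and is exactly the route the paper intends: the text following Remark 3.5 explicitly identifies the corollary as the extreme case $R=\mathbb{C}1$ of Theorem 3.3, and your verification that the trivial factor satisfies Definition 3.1, that every state restricts to $\psi$ (so $J_{\mathbf{R}}(\mathbf{A},\mathbf{B})=J(\mathbf{A},\mathbf{B})=\{\mu\otimes_{m}\nu\}$ and every coupling is a coupling with respect to $(R,\psi)$), is the routine checking the paper leaves implicit.
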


\begin{remark}
This theorem and its corollary in fact continues to hold if we use a F\o lner
net instead of a F\o lner sequence, and a net of couplings with the same
directed set as the F\o lner net. This allows one to use for example the form
$\lim_{T\rightarrow\infty}\frac{1}{T}\int_{-T}^{T}$ in the case where $T\in
G=\mathbb{R}$.
\end{remark}

The corollary is the one extreme of Theorem 3.3, namely when $R=\mathbb{C}1$.
The other extreme is when $R=A\otimes_{m}B$, in which case Theorem 3.3 becomes
trivial, since $\kappa_{n}=\psi=\omega$ is then $\alpha\otimes_{m}\beta
$-invariant. Broadly the idea is therefore to apply Theorem 3.3 to situations
where $R$ is a proper subalgebra of $A\otimes_{m}B$.

In the remainder of this section we illustrate Theorem 3.3 by studying
consequences for W*-dynamical systems, using results from \cite{D, D2, D3}. In
subsequent sections we will illustrate the C*-algebraic case itself, by way of examples.

\begin{definition}
A \emph{W*-dynamical system} is a state preserving C*-dynamical system
$\mathbf{A}=\left(  A,\alpha,\mu\right)  $ where $\mu$ is a faithful normal
state on a (necessarily $\sigma$-finite) von Neumann algebra $A$.
\end{definition}

We now also use Definitions 3.1 and 3.2 for W*-dynamical systems, since
W*-dynamical systems are state preserving C*-dynamical systems. However, in
\cite{D, D2, D3} joinings were expressed in terms of algebraic tensor
products, so as mentioned above, it is important to keep in mind that any
state on the algebraic tensor product of two C*-algebras, and in particular
two von Neumann algebras, can be extended to a state on the maximal
C*-algebraic tensor product of the two algebras, as explained in \cite[Section
4]{D2}. So even though we are working on the maximal C*-algebraic tensor
product, we will still be able to apply results from \cite{D, D2, D3}.

We need some additional background and notation regarding a W*-dynamical
system $\mathbf{A}$ before we proceed (a more general setup and more details
can be found in \cite{D, D2, D3}): The cyclic representation of $A$ obtained
from $\mu$ by the GNS construction will be denoted by $\left(  H,\pi
,\Omega\right)  $. The associated modular conjugation will be denoted by $J$
and we set
\[
j:B(H)\rightarrow B(H):a\mapsto Ja^{\ast}J.
\]
Note that $j^{-1}=j$. Then $\alpha$ can be represented by a unitary group $U$
on $H$ defined by extending
\[
U_{g}\pi(a)\Omega:=\pi(\alpha_{g}(a))\Omega.
\]
It satisfies
\[
U_{g}\pi(a)U_{g}^{\ast}=\pi(\alpha_{g}(a))
\]
for all $a\in A$ and $g\in G$; also see \cite[Corollary 2.3.17]{BR}. We are
particularly interested in the case where we have a second W*-dynamical system
$\mathbf{B}$ with $\left(  B,\nu\right)  =\left(  A,\mu\right)  $, in other
words we want to consider two dynamics on the same von Neumann algebra and
state. Exactly as for $U$ above, we obtain a unitary representation $V$ of
$\beta$.

Following the plan in \cite[Construction 3.4]{D} the ``commutant''
$\mathbf{\tilde{B}}$ of $\mathbf{B}$ can be defined as follows: Set $\tilde
{B}:=\pi(B)^{\prime}$ and then carry the state and dynamics of $\mathbf{B}$
over to $\tilde{B}$ in a natural way using $j$, by defining a state
$\tilde{\nu}$ and $\ast$-automorphism $\tilde{\beta}_{g}$ on $\tilde{B}$ by
$\tilde{\nu}(b):=\nu\circ\pi^{-1}\circ j(b)$ and $\tilde{\beta}_{g}%
(b):=j\circ\pi\circ\beta_{g}\circ\pi^{-1}\circ j(b)$ for all $g\in G$. From
Tomita-Takesaki theory one has that $V_{g}J=JV_{g}$ (see \cite[Construction
3.4]{D}). Then
\[
\tilde{\nu}(b)=\left\langle \Omega,b\Omega\right\rangle
\]
and
\[
\tilde{\beta}_{g}(b)=V_{g}bV_{g}^{\ast}%
\]
for all $b\in\tilde{B}$ and $g\in G$. This tells us that the unitary
representation of $\tilde{\beta}$ is the same as that of $\beta$, namely $V$.

A \emph{factor} of $\mathbf{A}$ is a W*-dynamical system $\mathbf{F=}\left(
F,\varphi,\lambda\right)  $ such that $\left(  F,\varphi\right)  $ is a factor
of $\left(  A,\alpha\right)  $ as in Definition 2.1, but with $F$ now a von
Neumann subalgebra of $A$, and $\lambda=\mu|_{F}$. In \cite{D, D2, D3} a more
general situation was considered, but this definition of a factor will do in
this paper. The term ``factor'' is used, since this is the terminology
appearing in the classical literature, and does not refer to a von Neumann
algebra with trivial center. (In \cite{D2} and \cite{D3} the term
``subsystem'' was used instead, but this terminology has a different, though
complementary, meaning in classical dynamics; see for example \cite[Section
1.1]{G}.) We say that $\mathbf{F}$ is a \emph{modular} factor of $\mathbf{A}$
if $F$ is invariant under the modular group $\sigma$ associated to $\left(
A,\mu\right)  $, i.e. $\sigma_{t}\left(  F\right)  =F$ for all $t\in
\mathbb{R}$. Assuming that $\mathbf{F}$ is also a modular factor of
$\mathbf{B}$ above, and writing
\[
\tilde{F}:=j\circ\pi\left(  F\right)
\]
we obtain a modular factor $\mathbf{\tilde{F}}=\left(  \tilde{F}%
,\tilde{\varphi},\tilde{\lambda}\right)  $ of $\mathbf{\tilde{B}}$. One can
then define a \emph{diagonal} state $\Delta_{\lambda}$ on the algebraic tensor
product $F\odot\tilde{F}$ by extending
\[
\delta\left(  a\otimes b\right)  :=\pi(a)b
\]
to a unital $\ast$-homomorphism $\delta:F\odot\tilde{F}\rightarrow B(H)$ and
setting%
\[
\Delta_{\lambda}(c):=\left\langle \Omega,\delta(c)\Omega\right\rangle
\]
for all $c\in F\odot\tilde{F}$. Since $\mathbf{F}$ is a modular factor, it
turns out that there is at least one joining of $\mathbf{A}$ and
$\mathbf{\tilde{B}}$ extending the state $\Delta_{\lambda}$, as explained in
\cite[Section 3]{D3}.

\begin{definition}
Let $\mathbf{A}$ and $\mathbf{B}$ be W*-dynamical systems with $\left(
A,\mu\right)  =\left(  B,\nu\right)  $, and assume both have $\mathbf{F}$ as a
modular factor. If $\Delta_{\lambda}$ has a unique extension to a joining of
$\mathbf{A}$ and $\mathbf{\tilde{B}}$, then we call $\mathbf{A}$ and
$\mathbf{\tilde{B}}$ \emph{disjoint over} $\mathbf{F}$.
\end{definition}

It is clear that this definition is closely related to Definition 3.1, and the
connection will be further clarified in the proof of Theorem 3.8 below.

Using Theorem 3.3 we can now prove the following result, where the notation
id$_{F}$ refers to the identity map $F\rightarrow F$.

\begin{theorem}
Let $\mathbf{A}$ and $\mathbf{B}$ be W*-dynamical systems with $\left(
A,\mu\right)  =\left(  B,\nu\right)  $ and with $G$ amenable, $\sigma$-compact
and locally compact. Let $\mathbf{F}$ be a modular factor of both $\mathbf{A}$
and $\mathbf{B}$, and assume that $\mathbf{A}$ and $\mathbf{\tilde{B}}$ are
disjoint over $\mathbf{F}$. Let $\eta_{n}:A\rightarrow A$ be any sequence of
$\ast$-automorphisms with $\eta_{n}|_{F}=$ id$_{F}$ and $\mu\circ\eta_{n}=\mu
$, and assume that $G\rightarrow\mathbb{C}:g\mapsto\mu\left(  \eta_{n}\left(
\alpha_{g}(a)\right)  \beta_{g}(b)\right)  $ is Borel measurable for all
$a,b\in A$. Then
\[
\lim_{n\rightarrow\infty}\frac{1}{\left|  \Lambda_{n}\right|  }\int
_{\Lambda_{n}}\mu\left(  \eta_{n}\left(  \alpha_{g}(a)\right)  \beta
_{g}(b)\right)  dg=\mu\left(  D(a)D(b)\right)
\]
for all $a,b\in A$, where $D:A\rightarrow F$ is the unique conditional
expectation such that $\mu\circ D=\mu$.

In particular, if $\mathbf{A}$ and $\mathbf{\tilde{B}}$ are disjoint (that is,
$F=\mathbb{C}$), then
\[
\lim_{n\rightarrow\infty}\frac{1}{\left|  \Lambda_{n}\right|  }\int
_{\Lambda_{n}}\mu\left(  \eta_{n}\left(  \alpha_{g}(a)\right)  \beta
_{g}(b)\right)  dg=\mu(a)\mu(b)
\]
for all $a,b\in A$.
\end{theorem}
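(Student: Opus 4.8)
The plan is to recognize the left-hand side as a special case of the ergodic theorem in Theorem 3.3, once we set up the right factor $\mathbf{R}$ and the right sequence of couplings. The natural candidate is to work not with $\mathbf{A}$ and $\mathbf{B}$ directly, but with $\mathbf{A}$ and $\mathbf{\tilde B}$, taking $\mathbf{R}$ to be built from the diagonal state $\Delta_\lambda$ on $F\odot\tilde F$ (extended to a state on the maximal C*-tensor product, as the excerpt notes is always possible). Disjointness of $\mathbf{A}$ and $\mathbf{\tilde B}$ over $\mathbf{F}$, by Definition 3.7, says exactly that $\Delta_\lambda$ has a unique extension to a joining; so $\mathbf{A}$ and $\mathbf{\tilde B}$ are disjoint with respect to $\mathbf{R}=(R,\rho,\psi)$ where $R$ is (the image under $\delta$ of) $F\odot\tilde F$ sitting inside $A\otimes_m\tilde B$, $\rho$ is the restricted action, and $\psi=\Delta_\lambda$. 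The unique element $\omega\in J_{\mathbf{R}}(\mathbf{A},\mathbf{\tilde B})$ is then the object whose existence we will exploit, and a standard computation (as in \cite{D3}) identifies $\omega(a\otimes b)$ with $\mu(D(a)\,\pi^{-1}j(b))$ or, after translating back through $j$ and $\pi$, with $\mu(D(a)D(b'))$ where $b'$ is the element of $A$ corresponding to $b\in\tilde B$; this is where the conditional expectation $D$ enters, via $\Delta_\lambda$ restricted to $F\odot\tilde F$.

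Next I would manufacture the couplings. Given the $\ast$-automorphism $\eta_n:A\to A$ with $\eta_n|_F=\mathrm{id}_F$ and $\mu\circ\eta_n=\mu$, define $\kappa_n$ on $A\otimes_m\tilde B$ by $\kappa_n(a\otimes b):=\mu\bigl(\eta_n(a)\,\pi^{-1}j(b)\bigr)$, or equivalently $\kappa_n(c)=\langle\Omega,\,(\pi\circ\eta_n)\!\otimes\!\mathrm{id}\,\text{-image of }c\cdot\Omega\rangle$ after the obvious identifications; one checks $\kappa_n(a\otimes1)=\mu(\eta_n(a))=\mu(a)$ and $\kappa_n(1\otimes b)=\tilde\nu(b)$, so $\kappa_n$ is a coupling of $(A,\mu)$ and $(\tilde B,\tilde\nu)$. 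The crucial point is that $\kappa_n|_R=\psi=\Delta_\lambda$: since $\eta_n$ fixes $F$ pointwise, $\kappa_n$ agrees with $\Delta_\lambda$ on $F\odot\tilde F$, so $\kappa_n$ is a coupling with respect to $(R,\psi)$. Then Theorem 3.3 applies and gives
\[
\lim_{n\to\infty}\frac{1}{|\Lambda_n|}\int_{\Lambda_n}\kappa_n\circ(\alpha_g\otimes_m\tilde\beta_g)(c)\,dg=\omega(c)
\]
for all $c$; specializing to $c=a\otimes b$ and tracking the unitaries $U_g,V_g$ together with $V_gJ=JV_g$ turns the integrand into $\mu(\eta_n(\alpha_g(a))\beta_g(b'))$, and the right side into $\mu(D(a)D(b'))$, which is the claimed formula. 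The measurability hypothesis on $g\mapsto\mu(\eta_n(\alpha_g(a))\beta_g(b))$ is precisely the alternative measurability assumption in Theorem 3.3, so we do not even need amenability-type measurability of $\alpha$ and $\beta$ separately.

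The main obstacle I expect is bookkeeping rather than any deep difficulty: carefully checking that $\kappa_n$ as defined is genuinely a state (positivity), that it really restricts to $\Delta_\lambda$ on $R$ — here one must use $\eta_n|_F=\mathrm{id}_F$ and that $D$ is characterized by $\mu\circ D=\mu$ so that $\Delta_\lambda(a\otimes b)$ depends on $a$ only through $D(a)$ — and that the identification $\omega(a\otimes b)=\mu(D(a)D(b'))$ is correct, which amounts to reproducing the joining-over-$\mathbf{F}$ computation from \cite{D3}. One also has to confirm that passing from $\mathbf{B}$ to $\mathbf{\tilde B}$ and back (using $\tilde\beta_g(b)=V_gbV_g^\ast$, $V_gJ=JV_g$, and $\tilde\nu(b)=\langle\Omega,b\Omega\rangle$) correctly converts statements about $\mu(\cdots\beta_g(b))$ into statements about $\tilde\beta_g$ on $\tilde B$; this is routine given the setup recalled before Definition 3.7 but is the step most prone to sign/adjoint errors. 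The ``in particular'' case $F=\mathbb{C}$ is then immediate: $R=\mathbb{C}1$, $\Delta_\lambda$ is the trivial state, disjointness over $\mathbb{C}$ is ordinary disjointness, $D=\mu(\cdot)1$, and $\mu(D(a)D(b))=\mu(a)\mu(b)$, exactly as in Corollary 3.4.
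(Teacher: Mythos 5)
Your strategy coincides with the paper's: pass to $\mathbf{\tilde{B}}$, take $R$ to be the closure of $F\odot\tilde{F}$ with $\psi=\Delta_{\lambda}$, observe that disjointness over $\mathbf{F}$ gives disjointness with respect to $\mathbf{R}$ with unique joining the relatively independent one $\mu\otimes_{\lambda}\tilde{\nu}$, build the couplings $\kappa_{n}=\mu_{\bigtriangleup}\circ\left(\eta_{n}\otimes_{m}\text{id}_{\tilde{B}}\right)$ from the diagonal coupling, check $\kappa_{n}|_{R}=\Delta_{\lambda}$ using $\eta_{n}|_{F}=$ id$_{F}$, and apply Theorem 3.3.

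The one place your outline would actually fail as written is the translation between $B$ and $\tilde{B}$, which you treat as a relabelling. Writing $b=j(\pi(b'))$ with $b'\in A$, one has $b\Omega=J\pi(b'^{\ast})\Omega=\Delta^{1/2}\pi(b')\Omega$, which equals $\pi(b')\Omega$ only when $\mu$ is tracial. Consequently neither your formula $\kappa_{n}(a\otimes b)=\mu\left(\eta_{n}(a)\pi^{-1}j(b)\right)$, nor the claim that the Theorem 3.3 integrand at $a\otimes b$ equals $\mu\left(\eta_{n}(\alpha_{g}(a))\beta_{g}(b')\right)$, nor the assertion that the measurability hypothesis (stated for $b\in A$) is ``precisely'' the alternative hypothesis of Theorem 3.3 (needed for $b\in\tilde{B}$), is a pointwise identity in general. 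The paper bridges this at the Hilbert-space level: all the relevant quantities have the form $\left\langle x,U_{g}^{\ast}W_{n}^{\ast}V_{g}y\right\rangle$ (with $W_{n}$ the unitary implementing $\eta_{n}$), where $y$ ranges over $\pi(A)\Omega$ in the hypothesis and conclusion but over $\tilde{B}\Omega$ in Theorem 3.3; since both sets are dense in $H$ and everything is uniformly bounded, one first upgrades the measurability and the limit statement $\frac{1}{\left|\Lambda_{n}\right|}\int_{\Lambda_{n}}\left\langle x,U_{g}^{\ast}W_{n}^{\ast}V_{g}y\right\rangle dg\rightarrow\left\langle Px,Py\right\rangle$ to all $x,y\in H$, and only then specializes to $x=\pi(a^{\ast})\Omega$, $y=\pi(b)\Omega$ to land on $\mu\left(D(a)D(b)\right)$ via $P\pi(c)\Omega=\pi(D(c))\Omega$. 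Without this density detour your argument proves the theorem only for tracial $\mu$; with it, the rest of your proposal (including the $F=\mathbb{C}$ special case) goes through exactly as in the paper.
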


\begin{proof}
The existence and uniqueness of $D$ follow from Tomita-Takesaki theory and the
fact that $\mathbf{F}$ is a modular factor of $\mathbf{A}$. Similarly we have
a unique conditional expectation $\tilde{D}:\tilde{A}\rightarrow\tilde{F}$
such that $\tilde{\mu}\circ\tilde{D}=\tilde{\mu}$. Note that if $P$ is the
projection of $H$ onto $\overline{\pi(F)\Omega}$, in terms of the notation
above, then $\pi(D(a))=P\pi(a)P$ and $\tilde{D}(b)=PbP$ for all $a\in A$ and
$b\in\tilde{B}$, as discussed for example in \cite[Subsection 10.2]{St}.

Let $R$ be the closure of $F\odot\tilde{F}$ in $A\otimes_{m}B$. Note that in
terms of the notation above, $\Delta_{\lambda}$ has at least one extension to
a state on $A\odot\tilde{B}$, namely the relatively independent joining
$\mu\odot_{\lambda}\tilde{\nu}=\Delta_{\lambda}\circ\left(  D\odot\tilde
{D}\right)  $ \cite[Section 3]{D3}, which can in turn be uniquely extended to
a state, say $\mu\otimes_{\lambda}\tilde{\nu}$, on $A\otimes_{m}\tilde{B}$
\cite[Proposition 4.1]{D2}, and therefore in particular $\Delta_{\lambda}$ can
be extended to a (necessarily unique) state on $R$. Since $\mathbf{A}$ and
$\mathbf{\tilde{B}}$ are in fact disjoint over $\mathbf{F}$, $\mu
\otimes_{\lambda}\tilde{\nu}$ is the unique joining extending $\Delta
_{\lambda}$. In the sense of Definition 3.1, $\mathbf{A}$ and $\mathbf{\tilde
{B}}$ are therefore disjoint with respect to $\mathbf{R}=\left(  R,\rho
,\Delta_{\lambda}\right)  $ where $\rho_{g}:=\alpha_{g}\otimes_{m}\tilde
{\beta}_{g}|_{R}$.

We define a coupling $\mu_{\bigtriangleup}$ of $\left(  A,\mu\right)  $ and
$\left(  \tilde{B},\tilde{\nu}\right)  $ by $\mu_{\bigtriangleup
}(c):=\left\langle \Omega,\delta(c)\Omega\right\rangle $ where the unital
$\ast$-homomorphism $\delta:A\otimes_{m}\tilde{B}\rightarrow B(H)$ is obtained
from $\delta(a\otimes b):=\pi(a)b$ for all $a\in A$ and $b\in\tilde{B}$,
extending $\delta$ appearing above. This type of coupling has also been used
in \cite{Fid}. From this we then define the sequence of couplings
\[
\kappa_{n}:=\mu_{\bigtriangleup}\circ\left(  \eta_{n}\otimes_{m}%
\text{id}_{\tilde{B}}\right)  .
\]
It is easily verified using elementary tensors, that each $\kappa_{n}$ is a
coupling with respect to $\left(  R,\Delta_{\lambda}\right)  $ in the sense of
Definition 3.2. Let $W_{n}$ be the unitary representation of $\eta_{n}$ in the
cyclic representation obtained form $\mu$, and note that $\mu\left(  \eta
_{n}\left(  \alpha_{g}(a)\right)  \beta_{g}(b)\right)  =\left\langle
\pi(a^{\ast})\Omega,U_{g}^{\ast}W_{n}^{\ast}V_{g}\pi(b)\Omega\right\rangle $
for all $a,b\in A$, so from Lebesgue's dominated convergence theorem and the
fact that $\pi(A)\Omega$ is dense in $H$ it follows that $g\mapsto\left\langle
x,U_{g}^{\ast}W_{n}^{\ast}V_{g}y\right\rangle $ is measurable on compact
subsets of $G$ for all $x,y\in H$. This in turn implies that $g\mapsto
\kappa_{n}\circ\left(  \alpha_{g}\otimes_{m}\tilde{\beta}_{g}\right)  \left(
a\otimes b\right)  =\left\langle \pi(a^{\ast})\Omega,U_{g}^{\ast}W_{n}^{\ast
}V_{g}b\Omega\right\rangle $ is measurable on compact subsets of $G$ for all
$a\in A$ and $b\in\tilde{B}$. Applying Theorem 3.3, it follows that
\begin{align*}
&  \lim_{n\rightarrow\infty}\frac{1}{\left|  \Lambda_{n}\right|  }%
\int_{\Lambda_{n}}\left\langle \pi(a)\Omega,U_{g}^{\ast}W_{n}^{\ast}%
V_{g}b\Omega\right\rangle dg\\
&  =\lim_{n\rightarrow\infty}\frac{1}{\left|  \Lambda_{n}\right|  }%
\int_{\Lambda_{n}}\kappa_{n}\circ\left(  \alpha_{g}\otimes_{m}\tilde{\beta
}_{g}\right)  \left(  a^{\ast}\otimes b\right)  dg\\
&  =\mu\otimes_{\lambda}\tilde{\nu}\left(  a^{\ast}\otimes b\right) \\
&  =\left\langle \pi(D(a))\Omega,\tilde{D}(b)\Omega\right\rangle \\
&  =\left\langle P\pi(a),Pb\Omega\right\rangle
\end{align*}
and therefore, since $\tilde{B}\Omega$ is also dense in $H$,
\[
\lim_{n\rightarrow\infty}\frac{1}{\left|  \Lambda_{n}\right|  }\int
_{\Lambda_{n}}\left\langle x,U_{g}^{\ast}W_{n}^{\ast}V_{g}y\right\rangle
=\left\langle Px,Py\right\rangle
\]
for all $x,y\in H$. Finally we can apply this to our current problem, namely
for all $a,b\in A$ we have
\begin{align*}
&  \lim_{n\rightarrow\infty}\frac{1}{\left|  \Lambda_{n}\right|  }%
\int_{\Lambda_{n}}\mu\left(  \eta_{n}\left(  \alpha_{g}(a)\right)  \beta
_{g}(b)\right)  dg\\
&  =\lim_{n\rightarrow\infty}\frac{1}{\left|  \Lambda_{n}\right|  }%
\int_{\Lambda_{n}}\left\langle \pi(a^{\ast})\Omega,U_{g}^{\ast}W_{n}^{\ast
}V_{g}\pi(b)\Omega\right\rangle dg\\
&  =\left\langle P\pi(a^{\ast})\Omega,P\pi(b)\Omega\right\rangle \\
&  =\left\langle \pi(D(a^{\ast}))\Omega,\pi(D(b))\Omega\right\rangle \\
&  =\left\langle \Omega,\pi(D(a)D(b))\Omega\right\rangle \\
&  =\mu(D(a)D(b))
\end{align*}
as required.

In particular, when $\mathbf{A}$ and $\mathbf{\tilde{B}}$ are disjoint, we
have $D=\mu$, so $\mu(a)\mu(b)$. Alternatively we can follow the above proof,
but using Corollary 3.4 instead of Theorem 3.3, to obtain this special case.
\end{proof}

We are therefore interested in pairs of disjoint W*-dynamical systems. An
archetypal example is weak mixing versus compactness; see \cite{NSZ, D2, D3}
for an extended discussion and definitions. In particular we recall from
\cite[Section 5]{D3} that a W*-dynamical system with $G$ abelian, has a
biggest compact factor, which we denote by 
$\mathbf{A}^{K}=\left(A^{K},\alpha^{K},\mu^K\right)$, 
and this factor is necessarily modular.

\begin{corollary}
Let $\mathbf{A}$ and $\mathbf{B}$ be W*-dynamical systems with $\left(
A,\mu\right)  =\left(  B,\nu\right)  $. Assume that $G$ is abelian, amenable,
$\sigma$-compact and locally compact, and that $G\rightarrow\mathbb{C}%
:g\mapsto\mu\left(  \alpha_{g}(a)\beta_{g}(b)\right)  $ is Borel measurable
for all $a,b\in A$. Assume furthermore that $\mathbf{B}$ is compact and that
$\mathbf{A}^{K}$ is a factor of $\mathbf{B}$. Then%
\[
\lim_{n\rightarrow\infty}\frac{1}{\left|  \Lambda_{n}\right|  }\int
_{\Lambda_{n}}\mu\left(  \alpha_{g}(a)\beta_{g}(b)\right)  dg=\mu\left(
D(a)D(b)\right)
\]
for all $a,b\in A$, where $D:A\rightarrow A^{K}$ is the unique conditional
expectation such that $\mu\circ D=\mu$.

In particular, if $\mathbf{A}$ is weakly mixing (in which case $\mathbf{A}%
^{K}$ is trivial, that is, $A^{K}=\mathbb{C}1$), then%
\begin{equation}
\lim_{n\rightarrow\infty}\frac{1}{\left|  \Lambda_{n}\right|  }\int
_{\Lambda_{n}}\mu\left(  \alpha_{g}(a)\beta_{g}(b)\right)  dg=\mu(a)\mu(b)
\tag{3.2}%
\end{equation}
for all $a,b\in A$.
\end{corollary}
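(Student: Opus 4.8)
The plan is to obtain Corollary 3.9 as the special case of Theorem 3.8 in which $\mathbf{F}=\mathbf{A}^{K}$ and $\eta_{n}=\mathrm{id}_{A}$ for every $n$; the work then reduces to checking that the hypotheses of Theorem 3.8 hold in this setting, the only substantial point being the disjointness of $\mathbf{A}$ and $\mathbf{\tilde{B}}$ over $\mathbf{A}^{K}$.

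First I would settle the structural hypotheses. By the fact recalled just before the corollary (from \cite[Section 5]{D3}), $\mathbf{A}^{K}$ is a modular factor of $\mathbf{A}$. Since $\left(B,\nu\right)=\left(A,\mu\right)$, the two systems share the same GNS triple and hence the same modular automorphism group; as $A^{K}$ is invariant under that group and, by hypothesis, is a factor of $\mathbf{B}$, it is automatically a modular factor of $\mathbf{B}$ as well. Thus the conditional expectation $D\colon A\to A^{K}$ with $\mu\circ D=\mu$ exists uniquely and is exactly the map $D$ appearing in Theorem 3.8. The constant sequence $\eta_{n}=\mathrm{id}_{A}$ trivially satisfies $\eta_{n}|_{A^{K}}=\mathrm{id}_{A^{K}}$ and $\mu\circ\eta_{n}=\mu$, while the measurability condition required by Theorem 3.8, namely that $g\mapsto\mu(\eta_{n}(\alpha_{g}(a))\beta_{g}(b))=\mu(\alpha_{g}(a)\beta_{g}(b))$ be Borel measurable, is precisely the hypothesis assumed in the corollary.

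The main step, and the point where the compactness assumption really enters, is to verify that $\mathbf{A}$ and $\mathbf{\tilde{B}}$ are disjoint over $\mathbf{A}^{K}$. Here one exploits that $\mathbf{A}^{K}$ is the \emph{largest} compact factor of $\mathbf{A}$ together with the fact that, $\mathbf{B}$ being compact, the Hilbert space $H$ decomposes into finite-dimensional subspaces invariant under the unitary representation of $\beta$: in a joining of $\mathbf{A}$ and $\mathbf{\tilde{B}}$, an eigenvector coming from $\mathbf{B}$ can correlate only with an eigenvector of $\mathbf{A}$ carrying a matching eigenvalue, all such eigenvectors of $\mathbf{A}$ lie in $\mathbf{A}^{K}$, and — because $\mathbf{A}^{K}$ is itself a factor of $\mathbf{B}$ and the joining is prescribed to restrict to the diagonal state on $\mathbf{A}^{K}\otimes\widetilde{\mathbf{A}^{K}}$ — the correlations are already completely determined, forcing the joining to be the relatively independent one $\mu\otimes_{\lambda}\tilde{\nu}$. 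This is exactly the weak-mixing-versus-compactness mechanism, and I would invoke the corresponding disjointness result from \cite{D3}; when $\mathbf{A}^{K}=\mathbb{C}1$ it specializes to the classical Furstenberg-type statement that a weakly mixing system is disjoint from every compact system, as in \cite{D2,D3}. I expect this disjointness verification, and in particular pinning down the right form of the result from \cite{D3}, to be the only delicate part of the argument.

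Once the disjointness is in place, Theorem 3.8 applies with no further changes and yields $\lim_{n\to\infty}\frac{1}{|\Lambda_{n}|}\int_{\Lambda_{n}}\mu(\alpha_{g}(a)\beta_{g}(b))\,dg=\mu(D(a)D(b))$ for all $a,b\in A$. For the last assertion, if $\mathbf{A}$ is weakly mixing then $A^{K}=\mathbb{C}1$, so the hypothesis that $A^{K}$ be a factor of $\mathbf{B}$ is automatic, the $\mu$-preserving conditional expectation onto $\mathbb{C}1$ is $D(a)=\mu(a)1$, and therefore $\mu(D(a)D(b))=\mu(a)\mu(b)$; alternatively one may re-run the argument with Corollary 3.4 in place of Theorem 3.8.
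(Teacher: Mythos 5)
Your proposal is correct and follows essentially the same route as the paper: both reduce the statement to Theorem 3.8 with $\mathbf{F}=\mathbf{A}^{K}$ and $\eta_{n}=\mathrm{id}_{A}$, observe that $\mathbf{A}^{K}$ is a modular factor of $\mathbf{B}$ because $\nu=\mu$, and obtain the key disjointness of $\mathbf{A}$ and $\mathbf{\tilde{B}}$ over $\mathbf{A}^{K}$ by citing the weak-mixing-versus-compactness result of \cite{D3} (the paper uses \cite[Theorem 5.6]{D3}, and \cite[Corollary 5.5]{D3} for $A^{K}=\mathbb{C}1$ in the weakly mixing case). The extra heuristic you give for the disjointness mechanism is consistent with, but not a replacement for, that citation.
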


\begin{proof}
Since $\nu=\mu$, while $\mathbf{A}^{K}$ is a modular factor of $\mathbf{A}$,
it follows that $\mathbf{A}^{K}$ is also a modular factor of $\mathbf{B}$.
Therefore $\mathbf{A}$ and $\mathbf{\tilde{B}}$ are disjoint over
$\mathbf{A}^{K}$ by \cite[Theorem 5.6]{D3}, and the result follows from Theorem
3.8. When $\mathbf{A}$ is weakly mixing, then $A^{K}=\mathbb{C}1$ by
\cite[Corollary 5.5]{D3}, therefore $D=\mu$, and the special case follows.
\end{proof}

\begin{remark}
Using \cite[Theorem 2.8]{D2}, we can obtain a variation on the special case in
the result above, namely assuming that $\mathbf{B}$ is ergodic with discrete
spectrum, instead of compact, but relaxing the assumption that $G$ be abelian,
eq. (3.2) still holds.
\end{remark}

\begin{remark}
Following a similar proof, using \cite[Theorem 4.3]{D3} instead of
\cite[Theorem 5.6]{D3}, one finds that Corollary 3.9 also holds when
$\mathbf{A}$ is any W*-dynamical system, $\mathbf{B}$ is an identity system
(i.e. $\beta_{g}=$ id$_{B}$), and $\mathbf{A}^{K}$ is replaced by the fixed
point factor $\mathbf{A}^{\alpha}=\left(  A^{\alpha},\text{id},\mu
|_{A^{\alpha}}\right)  $ of $\mathbf{A}$, without $G$ having to be abelian.
Note that the fixed point algebra indeed gives a modular factor $\mathbf{A}%
^{\alpha}$ of $\mathbf{A}$ \cite[Proposition 4.2]{D3}. The special case eq.
(3.2) now holds for ergodic $\mathbf{A}$, which means that $\mathbf{A}%
^{\alpha}=\mathbb{C}1$, and is then a standard characterization of ergodicity
(see also Section 4). This version of Corollary 3.9 however also follows
directly from the mean ergodic theorem and the representation of the
conditional expectations in terms of Hilbert space projections, so does not
require joining techniques for its proof.
\end{remark}

We end this section with a discussion of examples to illustrate the
W*-dynamical results above. Simple examples of weakly mixing (in fact strongly
mixing) and compact W*-dynamical systems on the same algebra and state, for
$\mathbb{Z}$-actions, are provided by group von Neumann algebras; see
\cite[Theorems 3.4 and 3.6]{D2}. We now firstly look at the relative case of
this example as well, namely where $\mathbf{A}^{K}$ is not trivial.

We use the same basic setting as in \cite[Section 3]{D2}, namely we consider
an automorphism $T$ of an arbitrary group $\Gamma$ to which we assign the
discrete topology. This leads to a \emph{dual system} $\mathbf{A}=\left(
A,\alpha,\mu\right)  $ with $G=\mathbb{Z}$, where $A$ is the group von Neumann
algebra of $\Gamma$, $\mu$ is its canonical trace, and $\alpha\left(
a\right)  :=UaU^{\ast}$ for all $a\in A$, with the unitary operator
$U:H\rightarrow H$ defined by
\[
Uf:=f\circ T^{-1}%
\]
where $H:=\ell^{2}(\Gamma)$. For any $g\in\Gamma$ we define $\delta_{g}\in H$
by $\delta_{g}(g)=1$ and $\delta_{g}(h)=0$ for $h\neq g$. Note that
$U\delta_{g}=\delta_{Tg}$. Setting $\Omega:=\delta_{1}$ where $1$ here refers
to the identity element of $\Gamma$, it is then easily seen that $\left(
H,\text{id}_{A},\Omega\right)  $ is the cyclic representation of $\left(
A,\mu\right)  $ and that $U\Omega=\Omega$, so $U$ is the unitary
representation of $\alpha$ in the cyclic representation. We can define the
\emph{finite orbit factor} $\mathbf{F}$ of $\mathbf{A}$ by defining $F$ to be
the von Neumann subalgebra of $A$ generated by the subgroup of elements of
$\Gamma$ which have finite orbits under $T$. Then we have the following fact.

\begin{proposition}
For a dual system $\mathbf{A}$ as described above, the finite orbit factor is
exactly $\mathbf{A}^{K}$.
\end{proposition}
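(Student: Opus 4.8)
The plan is to identify the biggest compact factor $\mathbf{A}^{K}$ of the dual system $\mathbf{A}$ in purely group-theoretic terms, and to match it against $\mathbf{F}$. First I would recall from \cite[Section 5]{D3} the characterization of $A^{K}$: since $G=\mathbb{Z}$ and the unitary representation of $\alpha$ in the cyclic representation is $U$, the compact part corresponds to the subspace $H_{c}\subset H$ spanned by the eigenvectors of $U$ together with (more precisely) the vectors with precompact $U$-orbit, and $A^{K}$ is the von Neumann subalgebra of $A$ whose GNS image generates $H_{c}$. Because $H=\ell^{2}(\Gamma)$ has the orthonormal basis $\{\delta_{g}:g\in\Gamma\}$ which is permuted by $U$ via $U\delta_{g}=\delta_{Tg}$, the spectral analysis of $U$ decomposes according to the $T$-orbits in $\Gamma$: on the $\ell^{2}$-span of a single orbit, $U$ acts as a bilateral shift (orbit infinite) or as a cyclic permutation (orbit finite), and only the finite orbits contribute eigenvectors / precompact-orbit vectors. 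Hence $H_{c}$ is exactly $\overline{\mathrm{span}}\{\delta_{g}:g\text{ has finite }T\text{-orbit}\}$.

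Next I would translate this Hilbert-space statement into an algebra statement. Let $\Gamma_{0}$ denote the set of elements of $\Gamma$ with finite $T$-orbit; one checks easily that $\Gamma_{0}$ is a $T$-invariant subgroup of $\Gamma$ (closed under products and inverses because orbits of $gh$ and $g^{-1}$ are controlled by those of $g$ and $h$), so the group von Neumann algebra $F:=W^{*}(\Gamma_{0})\subset A$ makes sense and is the definition of the finite orbit factor. The key point is that, under the identification $(H,\mathrm{id}_{A},\Omega)$ of the GNS data, $\overline{F\Omega}=\overline{\mathrm{span}}\{\delta_{g}:g\in\Gamma_{0}\}=H_{c}$, and that $F$ is precisely the von Neumann subalgebra of $A$ generated by $\{\lambda_{g}:g\in\Gamma_{0}\}$, where $\lambda_{g}$ is the left translation unitary. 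Since for a group von Neumann algebra a von Neumann subalgebra is determined by the closure of its orbit of $\Omega$ (using that the canonical trace is faithful and $\Omega$ is cyclic and separating, and that the projection onto $\overline{F\Omega}$ commutes with the relevant structure), this gives $F=A^{K}$ as subalgebras of $A$; the restricted action $\alpha^{K}=\alpha|_{A^{K}}$ then coincides with $\varphi=\alpha|_{F}$, and $\mu^{K}=\mu|_{A^{K}}=\mu|_{F}=\lambda$, so $\mathbf{F}=\mathbf{A}^{K}$ as W*-dynamical systems.

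The main obstacle, and the step deserving the most care, is the precise spectral claim that the precompact-orbit subspace for $U$ is spanned by the $\delta_{g}$ with $g\in\Gamma_{0}$, i.e.\ that an infinite $T$-orbit contributes \emph{nothing} to the compact part. This is essentially the statement that a bilateral shift on $\ell^{2}(\mathbb{Z})$ is weakly mixing (has no nonzero vector with precompact orbit, indeed continuous Lebesgue spectrum), applied orbit-by-orbit and then assembled over the (possibly uncountable) family of orbits via an orthogonal direct sum; I would verify that the direct-sum decomposition $H=\bigoplus_{\text{orbits }O}\ell^{2}(O)$ is $U$-reducing and that a vector has precompact $U$-orbit iff each of its components does and all but finitely many (in the appropriate $\ell^{2}$ sense) vanish on infinite orbits, so the compact part is the $\ell^{2}$-sum of the finite-orbit pieces. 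Once this is in hand, matching with the definition of $A^{K}$ from \cite[Section 5]{D3} and with the definition of $F$ is routine, and modularity of $\mathbf{F}$ is automatic since $A$ is tracial (the modular group is trivial), consistent with the general fact that $\mathbf{A}^{K}$ is modular.
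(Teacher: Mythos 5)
Your argument is correct and shares the overall architecture of the paper's proof: identify $\overline{A^{K}\Omega}$ with the closed span of the eigenvectors of $U$ (citing \cite[Section 5]{D3}), show this space equals $\overline{F\Omega}=\overline{\mathrm{span}}\{\delta_{g}:g\in E\}$ where $E$ is the subgroup of finite-orbit elements, and then upgrade the equality of these GNS closures to $F=A^{K}$ via the trace-preserving conditional expectation (the traciality of $\mu$ making the modular-invariance hypothesis automatic). Where you differ is in the middle, Hilbert-space step. The paper works on $H_{F}^{\perp}$ directly: since $\langle U^{n}\delta_{g},\delta_{h}\rangle=0$ eventually for $g,h$ with infinite orbits, all matrix coefficients $\langle U^{n}x,y\rangle$ with $y\in H_{F}^{\perp}$ tend to $0$, which is incompatible with an eigenvector having a nonzero component in $H_{F}^{\perp}$. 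You instead decompose $H=\bigoplus_{O}\ell^{2}(O)$ over the $T$-orbits, identify $U$ on each infinite orbit as a bilateral shift, and invoke the fact that the shift has no eigenvectors (trivial compact part), together with the fact that the compact part of a direct sum is the direct sum of the compact parts. The two arguments rest on the same underlying observation, but yours is more structural and makes the spectral picture explicit, while the paper's is more elementary and self-contained (it needs only the eventual vanishing of a single matrix coefficient, not the JdLG/spectral theory of the shift). One small point to tighten: your statement that a precompact-orbit vector has ``all but finitely many'' components vanishing on infinite orbits should simply read that \emph{every} component on an infinite orbit vanishes; this follows at once from the componentwise decomposition of eigenvectors, and is what your conclusion actually uses.
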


\begin{proof}
Let $\mathbf{F}$ denote the finite orbit factor of $\mathbf{A}$, then we
simply have to prove that $F=A^{K}$. Since $\mathbf{F}$ is compact
\cite[Theorem 3.5]{D2}, it follows that $F\subset A^{K}$. So let's look at the
converse. The proof uses a similar idea as in the proof of \cite[Theorem
3.4]{D2}.

Let $l$ be the left regular representation of $\Gamma$ on $H$, i.e. $\left[
l(g)f\right]  (h)=f\left(  g^{-1}h\right)  $ for all $g,h\in\Gamma$ and $f\in
H$. So $A$ is generated by $\left\{  l(g):g\in\Gamma\right\}  $ and $F$ is
generated by $\left\{  l(g):g\in E\right\}  $, where $E:=\left\{  g\in
\Gamma:T^{\mathbb{Z}}(g)\text{ is finite}\right\}  $. Setting $H_{F}%
:=\overline{F\Omega}$, we see that since $l(g)\Omega=\delta_{g}$, the space
$H_{F}^{\perp}$ is spanned by the orthonormal set of vectors $\left\{
\delta_{g}:g\in G\backslash E\right\}  $. Since the elements of $G\backslash
E$ have infinite orbits under $T$, we have $\left\langle U^{n}\delta
_{g},\delta_{h}\right\rangle =0$ for $n$ large enough, for all $g,h\in
G\backslash E$. It follows that
\[
\lim_{n\rightarrow\infty}\left\langle U^{n}x,y\right\rangle =0
\]
for all $x,y\in H_{F}^{\perp}$. However, $H_{F}$ is invariant under $U$, so it
follows that%
\[
\lim_{n\rightarrow\infty}\left\langle U^{n}x,y\right\rangle =0
\]
for any $x\in H$ and $y\in H_{F}^{\perp}$.

Now, let $H_{0}$ be the subspace of $H$ spanned by eigenvectors of $U$. Since
$F\subset A^{K}$, it follows from \cite[Theorem 5.4]{D3} that $H_{F}%
\subset\overline{A^{K}\Omega}=H_{0}$. Suppose there is an eigenvector $v$ of
$U$ which is not in $H_{F}$, and let the corresponding eigenvalue be denoted
by $c$. (Since  $U$ is unitary, $|c|=1$.) Then there is a $y\in H_{F}^{\perp}$ such that $\left\langle
v,y\right\rangle \neq0$, hence the limit $\lim_{n\rightarrow\infty
}\left\langle U^{n}v,y\right\rangle =\left(  \lim_{n\rightarrow\infty}%
c^{n}\right)  \left\langle v,y\right\rangle $ is not zero (and does not even
necessarily exist), contradicting what we found above. It follows that such a
$v$ does not exist, therefore $H_{F}=H_{0}$.

Next we need to carry this result over to the algebras themselves. What we
have shown is that $\overline{F\Omega}=\overline{A^{K}\Omega}$, but $F$ is
invariant under the modular group of $\left(  A^{K},\mu^{K}\right)  $, since
$\mu^{K}=\mu|_{A^K}$ is tracial and its modular group therefore trivial. We have a
resulting conditional expectation $D:A^{K}\rightarrow F$ which is implemented
by the projection of $H_{0}$ onto $H_{F}$, which means that $D$ is the
identity mapping. So $F=A^{K}$.
\end{proof}

\begin{example}
Consider in particular the case where $\Gamma$ is the free group on any
infinite set of symbols $S$. Partition $S$ into two sets $S_{1}$ and $S_{2}$,
with at least $S_{2}$ infinite. Define the automorphism $T:\Gamma
\rightarrow\Gamma$ by extending any bijection $T:S\rightarrow S$ satisfying
$T(S_{1})=S_{1}$ and $T(S_{2})=S_{2}$, and with the orbits of $T$ on all
elements of $S_{1}$ being finite, but infinite on all elements of $S_{2}$. In
this way we obtain a dual system $\mathbf{A}$ as discussed above, and because
of Proposition 3.12, $\mathbf{A}^{K}$ is given by the von Neumann subalgebra
of $A$ generated by the subgroup of $\Gamma$ generated by $S_{1}$. In the same
way we also consider another dual system $\mathbf{B}$ with $\left(
B,\nu\right)  =\left(  A,\mu\right)  $, given by a bijection $K:S\rightarrow
S$, again satisfying $K(S_{1})=S_{1}$ and $K(S_{2})=S_{2}$, and such that
$K|_{S_{1}}=T|_{S_{1}}$ while all the orbits of $K$ on $S_{2}$ are finite.
Note that $\mathbf{B}$ is compact \cite[Theorem 3.5]{D2}. Furthermore,
$\mathbf{A}^{K}$ is a factor of the compact system $\mathbf{B}$, and it is
necessarily a modular factor, since $\mu$ is tracial and it modular group
therefore trivial. This example satisfies all the requirements of Corollary
3.9. It particular it provides examples where $\mathbf{A}^{K}$ is not trivial,
and not even an identity system, namely when $S_{1}$ has more than one element
and $T|_{S_{1}}$ is not the identity map. Note that the dynamics of $\mathbf{A}^{K}$
can in fact be fairly complicated. As an illustration, consider the countable 
set of symbols $S_1=\{s_1,s_2,s_3,...\}$, and let $T|_{S_1}$ be given by the 
cycles of increasing length $(s_1,s_2)$, $(s_3,s_4,s_5)$, $(s_6,s_7,s_8,s_9)$, 
and so on. So, for example, $T(s_6)=s_7$ and $T(s_9)=s_6$. Then there are elements
of $\overline{A^{K}\Omega}$ that do not have finite orbits under $U$, for example
$\sum_{n=1}^\infty \delta_{s_n}/n$, so we don't have periodic dynamics.
\end{example}

Next we exhibit a pair of disjoint compact systems by considering
$G=\mathbb{R}^{2}$ actions on quantum tori.

\begin{example}
Refer to Example 2.4, however consider $\theta_{1},\theta_{2} \in\mathbb{R}$.
Let $A:=M_{\theta_{1}}$ and $B:=M_{\theta_{2}}$, and let $\mu$ and $\nu$ be
their respective canonical traces. Furthermore, set
\[
\alpha_{s,t}:=\tau_{ps,qt}\text{ \ and \ }\beta_{s,t}:=\tau_{cs,dt}
\]
for all $\left(  s,t\right)  \in\mathbb{R}^{2}$, where $p,q,c,d\in
\mathbb{R}\backslash\left\{  0\right\}  $ are fixed. Then $\mathbf{A}=\left(
A,\alpha,\mu\right)  $ and $\mathbf{B}=\left(  B,\beta,\nu\right)  $ are
W*-dynamical systems.

Setting $\chi_{g,h}\left(  s,t\right)  :=e^{2\pi i\left(  gs+ht\right)  }$ for
all $\left(  g,h\right)  ,\left(  s,t\right)  \in\mathbb{R}^{2}$, one can use
harmonic analysis on $\mathbb{T}^{2}$ to show that $H$ is spanned by the
eigenvectors of the unitary group $U$ and of the unitary group $V$, namely
\[
U_{s,t}\chi_{m,n}=\chi_{mp,nq}(s,t)\chi_{m,n}
\]
and
\[
V_{s,t}\chi_{m,n}=\chi_{mc,nd}(s,t)\chi_{m,n}
\]
for all $(s,t)\in\mathbb{R}^{2}$ and $(m,n)\in\mathbb{Z}^{2}$. Therefore
$\mathbf{A}$ and $\mathbf{\tilde{B}}$ are compact W*-dynamical systems
\cite[Definition 2.5 and Proposition 2.6]{D2}, and their point spectra are
\[
\sigma_{\mathbf{A}}=\left\{  \chi_{mp,nq}:\left(  m,n\right)  \in
\mathbb{Z}^{2}\right\}
\]
and
\[
\sigma_{\mathbf{\tilde{B}}}=\left\{  \chi_{mc,nd}:\left(  m,n\right)
\in\mathbb{Z}^{2}\right\}
\]
respectively. Since $p,q,c,d\neq0$, one simultaneously sees that $\mathbf{A}$
and $\mathbf{\tilde{B}}$ are also ergodic, since the fixed point spaces of $U$
and $V$ are one dimensional (they are spanned by the eigenvector $\chi
_{0,0}=1=\Omega$). Now, if either $p/c$ or $q/d$, or both, are irrational, we
see that
\[
\sigma_{\mathbf{A}}\cap\sigma_{\mathbf{\tilde{B}}}=\left\{  1\right\}
\]
and therefore by \cite[Theorem 2.8]{D2} $\mathbf{A}$ and $\mathbf{\tilde{B}}$
are disjoint. A C*-algebraic version of this disjointness (though requiring
both $p/c$ and $q/d$ to be irrational) will be considered in Section 4.

When $\theta_{1}=\theta_{2}$, so $\left(  B,\nu\right)  =\left(  A,\mu\right)
$, one can use harmonic analysis on $\mathbb{T}^{2}$ and the Lebesgue
dominated convergence theorem, to show that for any $\ast$-automorphism
$\eta:A\rightarrow A$ with $\mu\circ\eta=\mu$ the function
\[
\mathbb{R}^{2}\rightarrow\mathbb{C}:\left(  s,t\right)  \mapsto\mu\left(
\eta\left(  \alpha_{s,t}(a)\right)  \beta_{s,t}(b)\right)
\]
is continuous for all $a,b\in A$, and therefore Theorem 3.8 applies.
\end{example}

We are not aware of pairs of disjoint weakly mixing W*- (or C*-) dynamical
systems on the same algebra and state.

\section{Unique ergodicity}

In this section we consider a simple connection between disjointness and
unique ergodicity. In particular we discuss examples of uniquely ergodic
C*-dynamical systems, and use this to obtain a C*-algebraic (rather than
W*-algebraic) example satisfying the assumptions of Corollary 3.4. We start
with the definition of unique ergodicity (also see \cite[Definition 4.5]{A1}).

\begin{definition}
A C*-dynamical system is \emph{uniquely ergodic} if it has a unique invariant state.
\end{definition}

Note that using arguments similar to the one in the proof of Theorem 3.3, one
can show that an amenable C*-dynamical system has at least one invariant state
by starting with an arbitrary state and considering its sequence of averages
over $\left(  \Lambda_{n}\right)  $, and furthermore that an amenable
C*-dynamical system is uniquely ergodic if and only if the limit
\[
\lim_{n\rightarrow\infty}\frac{1}{\left|  \Lambda_{n}\right|  }\int
_{\Lambda_{n}}\alpha_{g}(a)dg
\]
exists in the norm topology and is a scalar multiple of the identity for every
$a\in A$.

From properties of the Bochner integral (see \cite[Theorem II.2.6]{DU}) it
also follows that unique ergodicity implies ergodicity in the sense that in
the GNS representation obtained from the unique invariant state, the unitary
representation of the dynamics has a one dimensional fixed point space, or
equivalently
\[
\lim_{n\rightarrow\infty}\frac{1}{\left|  \Lambda_{n}\right|  }\int
_{\Lambda_{n}}\mu(\alpha_{g}(a)b)dg=\mu(a)\mu(b)
\]
for all $a,b\in A$, where on compact sets $g\mapsto\alpha_{g}(a)b$ is $\rho
$-measurable by \cite[Definition II.1.1]{DU}, and $g\mapsto\mu(\alpha
_{g}(a)b)$ is therefore $\rho$-measurable by Pettis's measurability theorem.
See \cite{dBDS} for the relevant general background regarding ergodicity.

The converse is not true however. For example, consider the shift $\ast
$-automorphism on a countably infinite tensor product $A$ of a C*-algebra $C$
with itself. Any state on $C$ then gives a shift invariant state on $A$,
leading to an ergodic (and in fact strongly mixing) state preserving dynamical
system. Since any state on $C$ will do, the shift is not uniquely ergodic though.

An example of unique ergodicity is provided by the quantum torus which
illustrates that one should indeed consider unique ergodicity for group
actions other than $\mathbb{Z}$:

\begin{proposition}
The C*-dynamical system $\left(  A_{\theta},\tau\right)  $ given by Example
2.4 is uniquely ergodic.
\end{proposition}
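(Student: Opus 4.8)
The plan is to show that the canonical trace Tr is the unique $\tau$-invariant state on $A_\theta$ by using the characterization of unique ergodicity for amenable systems noted just above the Proposition, combined with the simple eigenvalue relations (2.4). Concretely, $A_\theta$ is generated (as a C*-algebra) by $u$ and $v$, so it suffices to understand the Cesàro averages of $\tau_{s,t}$ on monomials $u^m v^n$ (and their adjoints), since these span a dense $*$-subalgebra of $A_\theta$ and the averaging operators are norm-contractive, so convergence on a dense set suffices. The group here is $\mathbb{R}^2$, which is abelian, $\sigma$-compact, locally compact and amenable, and $(A_\theta,\tau,\mathrm{Tr})$ was already observed in Example 2.4 to be an amenable state preserving C*-dynamical system, so the machinery applies.

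First I would compute, using (2.4) and its obvious consequence for monomials, that $\tau_{s,t}(u^m v^n) = e^{2\pi i(ms+nt)} u^m v^n$ for all $(m,n)\in\mathbb{Z}^2$ and $(s,t)\in\mathbb{R}^2$ — this follows since $\tau_{s,t}$ is a $*$-automorphism, so it is multiplicative, and one handles general words in $u,v,u^*,v^*$ by reducing them using the commutation relation $uv = e^{2\pi i\theta}vu$ (which only contributes a scalar) to the form (scalar)$\,u^m v^n$. Then, choosing a F\o lner sequence for $\mathbb{R}^2$, say $\Lambda_N = [-N,N]^2$ (or invoking Remark 3.5 to use $\frac{1}{(2N)^2}\int_{[-N,N]^2}$ directly), I would evaluate
\[
\frac{1}{|\Lambda_N|}\int_{\Lambda_N}\tau_{s,t}(u^m v^n)\,ds\,dt
= \left(\frac{1}{(2N)^2}\int_{[-N,N]^2} e^{2\pi i(ms+nt)}\,ds\,dt\right) u^m v^n .
\]
For $(m,n)=(0,0)$ this is constantly $u^0v^0 = 1$; for $(m,n)\neq(0,0)$ the scalar integral is a product of sinc-type factors that tends to $0$ as $N\to\infty$ (for instance if $m\neq 0$ the $s$-integral equals $\sin(2\pi mN)/(2\pi mN)\to 0$). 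Hence the norm limit $\lim_N \frac{1}{|\Lambda_N|}\int_{\Lambda_N}\tau_{s,t}(a)\,ds\,dt$ exists for every monomial $a$ and equals $\mathrm{Tr}(a)\,1$, since $\mathrm{Tr}(u^m v^n) = \langle\Omega, u^m v^n\Omega\rangle = 0$ for $(m,n)\neq(0,0)$ and $\mathrm{Tr}(1)=1$.

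To pass from monomials to all of $A_\theta$, I would note that the averaging maps $M_N(a) := \frac{1}{|\Lambda_N|}\int_{\Lambda_N}\tau_{s,t}(a)\,ds\,dt$ are linear and norm-contractive (as $\|\tau_{s,t}(a)\| = \|a\|$), so $\{a : M_N(a) \text{ converges in norm to } \mathrm{Tr}(a)1\}$ is a norm-closed subspace of $A_\theta$; it contains all monomials in $u,v,u^*,v^*$, hence their linear span, hence its closure, which is all of $A_\theta$. By the stated characterization of unique ergodicity for amenable C*-dynamical systems, $(A_\theta,\tau)$ is uniquely ergodic, with unique invariant state $\mathrm{Tr}$. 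I expect the only mildly delicate point to be the bookkeeping that $*$-words in $u,v$ reduce modulo scalars to monomials $u^m v^n$ — but this is purely the commutation relation $uv = e^{2\pi i\theta} vu$ applied repeatedly, together with $u^* = u^{-1}$, $v^* = v^{-1}$ (the generators are unitaries), so it is routine; everything else is an elementary Fourier computation plus the density/contraction argument.
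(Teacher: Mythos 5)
Your proof is correct, but it takes a different route from the paper's. The paper argues directly on states: for any invariant state $\mu$, the identity $\mu(u^{m}v^{n})=\mu(\tau_{s,t}(u^{m}v^{n}))=e^{2\pi i(ms+nt)}\mu(u^{m}v^{n})$ forces $\mu(u^{m}v^{n})=0$ unless $m=n=0$, so $\mu$ agrees with Tr on the dense $\ast$-subalgebra spanned by the monomials and hence everywhere; no ergodic averaging is needed at all. You instead verify the norm-convergence criterion stated after Definition 4.1, computing $\frac{1}{|\Lambda_{N}|}\int_{\Lambda_{N}}\tau_{s,t}(u^{m}v^{n})\,ds\,dt\rightarrow\mathrm{Tr}(u^{m}v^{n})1$ via the sinc factors and then extending by contractivity and density. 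Both arguments are sound and rest on the same eigenvalue relations (2.4) and the same dense subalgebra; the paper's is shorter and avoids invoking the averaging characterization, while yours yields strictly more information (explicit norm convergence of the ergodic averages to $\mathrm{Tr}(a)1$, i.e.\ the analogue of condition (ii) of Theorem 5.2) and is in fact the technique the paper itself uses later in Proposition 5.4 for the relative case. The only points to keep tidy in your write-up are the ones you already flagged: the reduction of general $\ast$-words to scalar multiples of $u^{m}v^{n}$ via $uv=e^{2\pi i\theta}vu$ and unitarity of $u,v$, and the standard three-epsilon argument showing that the set on which $M_{N}(a)$ converges to $\mathrm{Tr}(a)1$ is norm closed.
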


\begin{proof}
We know that the canonical trace Tr is invariant. Let $\mu$ be any invariant
state of $\left(  A_{\theta},\tau\right)  $. Let $B$ be the $\ast$-subalgebra
of $A_{\theta}$ generated by $\left\{  u,v\right\}  $, so $B$ consists of
finite linear combinations of elements of the form $u^{m}v^{n}$, with
$m,n\in\mathbb{Z}$. By (2.4) we have
\[
\mu\left(  u^{m}v^{n}\right)  =\mu\left(  \tau_{s,t}\left(  u^{m}v^{n}\right)
\right)  =e^{2\pi i\left(  ms+nt\right)  }\mu\left(  u^{m}v^{n}\right)
\]
which means that $m=n=0$ or $\mu\left(  u^{m}v^{n}\right)  =0$. However Tr
satisfies these conditions as well, so $\mu|_{B}=$ Tr$|_{B}$ and since $B$ is
dense in $A_{\theta}$, we have $\mu=$ Tr.
\end{proof}

This example is of course very simple in the sense that it is periodic in each
of the two real parameters of its action; in effect $\mathbb{R}^{2}$ acts on
$\mathbb{T}^{2}$ which in turn acts on $A_{\theta}$. A slightly more
complicated example is given by the following proposition.

\begin{proposition}
Let $\theta_{1},\theta_{2}\in\mathbb{R}$ and in terms of Example 2.4 set
$A:=A_{\theta_{1}}$ and $B:=A_{\theta_{2}}$. Furthermore, set $\alpha
_{s,t}:=\tau_{ps,qt}$ and $\beta_{s,t}:=\tau_{cs,dt}$ for all $\left(
s,t\right)  \in\mathbb{R}^{2}$, where $p,q,c,d\in\mathbb{R}\backslash\left\{
0\right\}  $ are fixed. Also assume that $p/c$ and $q/d$ are both irrational.
Then the C*-dynamical system $\left(  A\otimes_{m}B,\alpha\otimes_{m}%
\beta\right)  $ is uniquely ergodic.
\end{proposition}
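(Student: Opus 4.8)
The plan is to mimic the proof of Proposition 4.2, working on a suitable dense $\ast$-subalgebra of $A\otimes_m B$ and using the eigenvalue relations (2.4). Concretely, the canonical trace $\mathrm{Tr}\otimes_m\mathrm{Tr}$ (that is, $\mu\otimes_m\nu$) is an invariant state of $\left(A\otimes_m B,\alpha\otimes_m\beta\right)$, so it suffices to show that any invariant state $\omega$ equals it. Let $B_0$ be the $\ast$-subalgebra of $A\otimes_m B$ consisting of finite linear combinations of elements $u_1^{m_1}v_1^{n_1}\otimes u_2^{m_2}v_2^{n_2}$ with $m_i,n_i\in\mathbb{Z}$, where $u_1,v_1$ generate $A=A_{\theta_1}$ and $u_2,v_2$ generate $B=A_{\theta_2}$. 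This $B_0$ is dense in $A\otimes_m B$, so by continuity it is enough to pin down $\omega$ on $B_0$.

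First I would compute the action of $\left(\alpha\otimes_m\beta\right)_{s,t}=\alpha_{s,t}\otimes_m\beta_{s,t}$ on a generating monomial. Using $\alpha_{s,t}=\tau_{ps,qt}$ and $\beta_{s,t}=\tau_{cs,dt}$ together with (2.4), one gets
\[
\left(\alpha_{s,t}\otimes_m\beta_{s,t}\right)\!\left(u_1^{m_1}v_1^{n_1}\otimes u_2^{m_2}v_2^{n_2}\right)=e^{2\pi i\left((m_1 p+m_2 c)s+(n_1 q+n_2 d)t\right)}\,u_1^{m_1}v_1^{n_1}\otimes u_2^{m_2}v_2^{n_2}.
\]
Invariance of $\omega$ then forces, for every $(s,t)\in\mathbb{R}^2$,
\[
\omega\!\left(u_1^{m_1}v_1^{n_1}\otimes u_2^{m_2}v_2^{n_2}\right)=e^{2\pi i\left((m_1 p+m_2 c)s+(n_1 q+n_2 d)t\right)}\,\omega\!\left(u_1^{m_1}v_1^{n_1}\otimes u_2^{m_2}v_2^{n_2}\right),
\]
so the value is zero unless $m_1 p+m_2 c=0$ and $n_1 q+n_2 d=0$.

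The key step is now to invoke the irrationality hypotheses: since $p/c$ is irrational, $m_1 p+m_2 c=0$ with $m_1,m_2\in\mathbb{Z}$ forces $m_1=m_2=0$, and likewise $q/d$ irrational forces $n_1=n_2=0$. Hence $\omega$ vanishes on every generating monomial except $1\otimes 1$, where it takes the value $1$. Exactly the same computation applies to $\mu\otimes_m\nu=\mathrm{Tr}\otimes_m\mathrm{Tr}$ (or one checks directly that the trace kills $u^m v^n$ for $(m,n)\neq(0,0)$, as in Proposition 4.2), so $\omega$ and $\mu\otimes_m\nu$ agree on $B_0$, and therefore on all of $A\otimes_m B$ by density and continuity. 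Thus the invariant state is unique.

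I do not expect a serious obstacle here; the only points requiring a little care are verifying that $B_0$ is genuinely dense in the \emph{maximal} tensor product (it is the image of the algebraic tensor product $B_{0,1}\odot B_{0,2}$ of the dense generating $\ast$-subalgebras, hence dense) and that the automorphism action on monomials is well defined with the stated eigenvalues, which is immediate from (2.4) and the definitions of $\alpha$ and $\beta$. The irrationality argument itself is the crux, and it is elementary. One could also phrase the whole argument spectrally in the GNS representation of $\omega$, but the direct computation on $B_0$ is the cleanest route and parallels the proof of Proposition 4.2.
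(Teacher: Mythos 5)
Your argument is correct and is essentially the paper's own proof: the paper likewise works on the dense $\ast$-subalgebra spanned by the monomials $\dot{u}^{j}\dot{v}^{k}\dot{w}^{l}\dot{z}^{m}$ (which coincide with your $u_1^{m_1}v_1^{n_1}\otimes u_2^{m_2}v_2^{n_2}$), extracts the same eigenvalue relation $e^{2\pi i[(jp+lc)s+(kq+md)t]}$ from invariance, and uses the irrationality of $p/c$ and $q/d$ to force all exponents to vanish, concluding $\omega=\mathrm{Tr}\otimes_m\mathrm{Tr}$. No issues.
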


\begin{proof}
Let $u$ and $v$ be the generators of $A$ as in Example 2.4, and let $w$ and
$z$ correspondingly be the generators of $B$. Using the notation $\dot
{u}=u\otimes1$, $\dot{v}=v\otimes1$, $\dot{w}=1\otimes w$ and
$\dot{z}=1\otimes z$, we see that the C*-algebra $A\otimes_{m}B$ is generated
by $\left\{  \dot{u},\dot{v},\dot{w},\dot{z}\right\}  $. Therefore the $\ast
$-algebra $C$ consisting of all finite linear combinations of elements of the
form $\dot{u}^{j}\dot{v}^{k}\dot{w}^{l}\dot{z}^{m}$, where $j,k,l,m\in
\mathbb{Z}$, is dense in $A\otimes_{m}B$. If $\omega$ is any invariant state
of $\left(  A\otimes_{m}B,\alpha\otimes_{m}\beta\right)  $ then we use a
similar argument as in Proposition 4.2:
\[
\omega\left(  \dot{u}^{j}\dot{v}^{k}\dot{w}^{l}\dot{z}^{m}\right)  =e^{2\pi
i\left[  \left(  jp+lc\right)  s+\left(  kq+md\right)  t\right]  }
\omega\left(  \dot{u}^{j}\dot{v}^{k}\dot{w}^{l}\dot{z}^{m}\right)
\]
for all $\left(  s,t\right)  \in\mathbb{R}^{2}$, therefore $\omega\left(
\dot{u}^{j}\dot{v}^{k}\dot{w}^{l}\dot{z}^{m}\right)  =0$ or $jp+lc=kq+md=0$.
Because of the irrationality of $p/c$ and $q/d$, the latter implies
$j=k=l=m=0$, and in this case $\omega\left(  \dot{u}^{j}\dot{v}^{k}\dot{w}%
^{l}\dot{z}^{m}\right)  =\omega\left(  1\otimes1\right)  =1=$ Tr$\otimes_{m}%
$Tr$\left(  \dot{u} ^{j}\dot{v}^{k}\dot{w}^{l}\dot{z}^{m}\right)  $. When at
least one of $j$, $k$, $l$ or $m$ is not zero we have Tr$\otimes_{m}%
$Tr$\left(  \dot{u}^{j} \dot{v}^{k}\dot{w}^{l}\dot{z}^{m}\right)  =$
Tr$\left(  u^{j}v^{k}\right)  $Tr$\left(  w^{l}z^{m}\right)  =0$. This proves
that $\omega=$ Tr$\otimes_{m}$Tr.
\end{proof}

Note that if $ps,cs\in\mathbb{Z}\backslash\left\{  0\right\}  $ then $p/c$ is
rational, and similarly for $q/d$. Therefore the product system in Proposition
4.3 is not periodic in either of the two real parameters; consider for example
the orbit of $\dot{u}+\dot{w}$.

\begin{remark}
In general, if we have two C*-dynamical systems $\left(  A,\mu\right)  $ and
$\left(  B,\nu\right)  $ with $G$ a topological group and $g\mapsto\alpha
_{g}(a)$ and $g\mapsto\beta_{g}(b)$ both continuous, then it is fairly
straightforward to show that $g\mapsto\alpha_{g}\otimes_{m}\beta_{g}(c)$ is
continuous for all $c\in A\otimes_{m}B$. Hence, if furthermore $\left(
A,\mu\right)  $ and $\left(  B,\nu\right)  $ are also amenable, so is their
product system. In particular the dynamics of the product system in
Proposition 4.3 is continuous in the sense just described, and the product
system is therefore amenable.
\end{remark}

Clearly Proposition 4.3 is similar to the disjointness result in Example 3.14,
although now we are in the C*-algebra context. Let us formalize this
connection: Two uniquely ergodic C*-dynamical systems $\left(  A,\alpha
\right)  $ and $\left(  B,\beta\right)  $ are called \emph{disjoint} if the
corresponding state preserving C*-dynamical systems are disjoint. Clearly this
is the case if and only if the product system $\left(  A\otimes_{m}%
B,\alpha\otimes_{m}\beta\right)  $ is uniquely ergodic, since any invariant
state of this product system is a joining because it restricts to invariant
states of $\left(  A,\alpha\right)  $ and $\left(  B,\beta\right)  $, which
are unique. In particular we have the following corollary which tells us that
examples of disjointness are not restricted to W*-dynamical systems:

\begin{corollary}
The C*-dynamical systems $\left(  A,\alpha\right)  $ and $\left(
B,\beta\right)  $ described in Proposition 4.3 are disjoint.
\end{corollary}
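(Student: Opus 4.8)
The plan is to invoke the characterization of disjointness for uniquely ergodic systems that was spelled out in the paragraph immediately preceding the corollary, namely that two uniquely ergodic C*-dynamical systems $\left(A,\alpha\right)$ and $\left(B,\beta\right)$ are disjoint if and only if their product system $\left(A\otimes_{m}B,\alpha\otimes_{m}\beta\right)$ is uniquely ergodic. So the proof reduces to checking that the two hypotheses of that equivalence are met for the systems of Proposition 4.3.

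First I would note that $\left(A,\alpha\right)$ and $\left(B,\beta\right)$ are each uniquely ergodic. Indeed, $A=A_{\theta_{1}}$ with $\alpha_{s,t}=\tau_{ps,qt}$, and since $p,q\neq0$ the map $\left(s,t\right)\mapsto\left(ps,qt\right)$ is a surjection of $\mathbb{R}^{2}$ onto $\mathbb{R}^{2}$, so the action $\alpha$ has exactly the same invariant states as $\tau$ on $A_{\theta_{1}}$; by Proposition 4.2 this is the single state $\mathrm{Tr}$. The same argument applies to $\left(B,\beta\right)$. Alternatively, and even more directly, one can run the computation of Proposition 4.2 verbatim: an $\alpha$-invariant state $\mu$ on $A_{\theta_{1}}$ satisfies $\mu\left(u^{m}v^{n}\right)=e^{2\pi i\left(mps+nqt\right)}\mu\left(u^{m}v^{n}\right)$ for all $\left(s,t\right)$, forcing $\mu\left(u^{m}v^{n}\right)=0$ unless $m=n=0$, hence $\mu=\mathrm{Tr}$.

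Next I would observe that the product system $\left(A\otimes_{m}B,\alpha\otimes_{m}\beta\right)$ is uniquely ergodic — but this is precisely the content of Proposition 4.3, which is already proved. Combining these two facts with the stated equivalence gives the disjointness of $\left(A,\alpha\right)$ and $\left(B,\beta\right)$ immediately. One should perhaps also remark, for the reader's comfort, why the equivalence applies: any invariant state $\omega$ of the product system restricts to an invariant state of $\left(A,\alpha\right)$ on $A\otimes1$ and of $\left(B,\beta\right)$ on $1\otimes B$, and by the unique ergodicity just established these restrictions must be $\mu=\mathrm{Tr}$ and $\nu=\mathrm{Tr}$ respectively; hence every invariant state of the product is a joining, so $J\left(\mathbf{A},\mathbf{B}\right)$ consists of exactly the unique invariant state of the product, which is $\mathrm{Tr}\otimes_{m}\mathrm{Tr}=\mu\otimes_{m}\nu$ by Proposition 4.3 — this is disjointness.

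There is really no substantial obstacle here: the corollary is a packaging of Proposition 4.3 through the general principle recorded just above it, and the only thing requiring a line of justification is the (routine) passage from unique ergodicity of $\left(A_{\theta},\tau\right)$ to unique ergodicity of the reparametrized action, which follows from $p,q,c,d\neq0$. If anything merits care it is simply making the logical structure explicit so that the reader sees the corollary is not circular — i.e., that Proposition 4.3 supplies the "only if" input and the definitions supply the rest.
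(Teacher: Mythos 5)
Your proposal is correct and follows exactly the paper's route: the paper derives this corollary immediately from the remark preceding it (two uniquely ergodic systems are disjoint if and only if their product system is uniquely ergodic) together with Proposition 4.3, which is precisely your argument. Your extra verification that each factor system is itself uniquely ergodic under the reparametrized action (using $p,q,c,d\neq 0$) is a small point the paper leaves implicit, and it is handled correctly.
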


In Section 3 we considered consequences of Theorem 3.3 for W*-dynamical
systems. We now consider an example satisfying all the requirements of Theorem
3.3 (specifically those of Corollary 3.4) in the C*-algebraic context.

\begin{example}
Still consider the situation in Proposition 4.3, and let $\mu$ and $\nu$ be
the canonical traces of $A$ and $B$ respectively. Let $u,v$ and $w,z$ be the
generators of $A$ and $B$ respectively, as in the proof of Proposition 4.3. It
is easily verified that $vz=zv$, so the C*-subalgebras $C^{\ast}(v)$ and
$C^{\ast}(z)$ of $B(H)$ generated by $v$ and $z$ respectively, are mutually
commuting. We can therefore use a similar idea as in Section 3 to construct a
coupling of $\left(  A,\mu\right)  $ and $\left(  B,\nu\right)  $. Let
\[
\delta:C^{\ast}(v)\odot C^{\ast}(z)\rightarrow B(H)
\]
be the $\ast$-homomorphism obtained by extending $\delta(a\otimes b)=ab$. We
can construct a conditional expectation $D_{1}:A\rightarrow C^{\ast}(v)$ such
that $D_{1}(u^{m}v^{n})=0$ for nonzero $m\in\mathbb{Z}$, and $D_{1}%
(v^{n})=v^{n}$ for all $n\in\mathbb{Z}$. Since $\mu(u^{m}v^{n})$ is $1$ when
$m=n=0$, and $0$ otherwise, it follows that $\mu\circ D_{1}=\mu$. Similarly we
obtain $D_{2}:B\rightarrow C^{\ast}(z)$ with $\nu\circ D_{2}=\nu$. We then
define a state $\kappa$ on $A\otimes_{m}B$ by continuously extending the
following:%
\[
\kappa\left(  c\right)  :=\left\langle \Omega,\delta\circ\left(  D_{1}\odot
D_{2}\right)  (c)\Omega\right\rangle
\]
for all $c\in A\odot B$. It is now easy to verify that $\kappa$ is a coupling
of $\left(  A,\mu\right)  $ and $\left(  B,\nu\right)  $. It is indeed a
nontrivial coupling, that is, $\kappa\neq\mu\otimes_{m}\nu$, since
$\kappa(v\otimes z^{-1})=1$ while $\mu\otimes_{m}\nu\left(  v\otimes
z^{-1}\right)  =0$. We therefore have all the ingredients required in Theorem
3.3, or more specifically, Corollary 3.4. Note that as opposed to the
W*-dynamical applications considered in Section 3, this gives a case of
Theorem 3.3 for two different algebras. Explicitly, Corollary 3.4 and
Corollary 4.5 say that
\[
\lim_{n\rightarrow\infty}\frac{1}{\left|\Lambda_{n}\right|}
\int_{\Lambda_{n}}\kappa\left(\alpha_{g}(a)\otimes\beta_{g}(b)\right)dg
=\mu(a)\nu(b)
\]
for all $a\in A$ and $b\in B$, where $\left(  \Lambda_{n}\right)  $ is any F\o
lner sequence in $G=\mathbb{R}^{2}$.

In particular, when $\theta_{1}=\theta_{2}$ (so $A=B$), and if we write
$D=D_{1}=D_{2}$, we have%
\[
\lim_{n\rightarrow\infty}\frac{1}{\left|  \Lambda_{n}\right|  }\int
_{\Lambda_{n}}\mu\left(  D\left(  \alpha_{g}(a)\right)  D\left(  \beta
_{g}(b)\right)  \right)  dg=\mu(a)\mu(b)
\]
for all $a,b\in A$.
\end{example}

\section{Relative unique ergodicity}

In this section we consider a relative version of unique ergodicity and its
connections with disjointness. We first prove \cite[Theorem 3.2]{AD} for
actions of more general groups than $\mathbb{Z}$. It should be noted that
\cite[Theorem 3.2]{AD} has already lead to further work, in particular
\cite{FM} and \cite{AM}, but only for actions of the group $\mathbb{Z}$. In
this paper our general setup allow for, and some of our examples require, more
general groups. Indeed, after proving the result for more general groups, we
again discuss examples using quantum tori, for actions of the group
$\mathbb{R}$. Lastly we use these results to obtain an example of (relative)
disjointness in the C*-algebraic context, and use it to illustrate Theorem
3.3. 

\begin{definition}
We call the C*-dynamical system $\left(  A,\alpha\right)  $ \emph{uniquely
ergodic relative to its fixed point algebra} if every state on $A^{\alpha}$
has a unique extension to an invariant state of $\left(  A,\alpha\right)  $.
\end{definition}

Note that from the remarks following Definition 4.1, the fixed point algebra
of a uniquely ergodic amenable C*-dynamical system is $A^{\alpha}=\mathbb{C}1$.

To prove the next result, we follow the basic plan of \cite{AD}'s proof. Note
that if we say that a conditional expectation $E:A\rightarrow A^{\alpha}$ is
$\alpha$\emph{-invariant}, we mean that $E\circ\alpha_{g}=E$ for all $g\in G$,
and similarly for linear functionals. Existence of limits, closures etc. are
all in terms of the norm topology on $A$.

\begin{theorem}
Let $\left(  A,\alpha\right)  $ be an amenable C*-dynamical system with $G$
unimodular, and let $\left(  \Lambda_{n}\right)  $ be both a right and left
F\o lner sequence. Then statements (i) to (v) below are equivalent.

(i) The system $\left(  A,\alpha\right)  $ is uniquely ergodic relative to its
fixed point algebra.

(ii) The limit
\[
\lim_{n\rightarrow\infty}\frac{1}{\left|  \Lambda_{n}\right|  }\int
_{\Lambda_{n}}\alpha_{g}(a)dg
\]
exists for every $a\in A$.

(iii) The subspace $A^{\alpha}+$ span$\left\{  a-\alpha_{g}(a):g\in G,a\in
A\right\}  $ is dense in $A$.

(iv) The equality $A=A^{\alpha}+\overline{\text{span}\left\{  a-\alpha
_{g}(a):g\in G,a\in A\right\}  }$ holds.

(v) Every bounded linear functional on $A^{\alpha}$ has a unique bounded
$\alpha$-invariant extension to $A$ with the same norm.

Furthermore, statements (i) to (v) imply the following statements:

(vi) There exists a unique $\alpha$-invariant conditional expectation $E$ from
$A$ onto $A^{\alpha}$.

(vii) The conditional expectation $E$ in (vi) is given by
\[
E(a)=\lim_{n\rightarrow\infty}\frac{1}{\left|  \Lambda_{n}\right|  }
\int_{\Lambda_{n}}\alpha_{g}(a)dg
\]
for all $a\in A$.
\end{theorem}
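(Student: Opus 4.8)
The plan is to prove the equivalences by a cycle of implications, then derive (vi) and (vii) separately. I would first establish (ii)$\Rightarrow$(iii)$\Leftrightarrow$(iv)$\Rightarrow$(v)$\Rightarrow$(i)$\Rightarrow$(ii), or some convenient reordering, using the averaging operator $M_n(a) := \frac{1}{|\Lambda_n|}\int_{\Lambda_n}\alpha_g(a)\,dg$ as the central object. The key elementary facts about $M_n$ are: it is a contraction ($\|M_n(a)\|\le\|a\|$, from $\|\alpha_g(a)\|=\|a\|$ and the Bochner integral estimate \cite[Theorem II.2.4]{DU}); it fixes $A^\alpha$ pointwise; and, crucially, for each fixed $h\in G$ one has $M_n(a-\alpha_h(a))\to 0$ by the Følner property (2.3), using the change-of-variables identity (2.2) to compare $\int_{\Lambda_n}\alpha_g(\alpha_h(a))\,dg = \int_{\Lambda_n h}\alpha_g(a)\,dg$ with $\int_{\Lambda_n}\alpha_g(a)\,dg$, the difference being bounded by $\frac{|\Lambda_n\triangle\Lambda_n h|}{|\Lambda_n|}\|a\|$. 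It follows that $M_n$ annihilates the coboundary subspace $N := \mathrm{span}\{a-\alpha_g(a)\}$ in the limit, and hence (by the contraction property) its closure $\overline{N}$ as well.

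For the concrete implications: (iv)$\Rightarrow$(ii) is then immediate, since on $A^\alpha\oplus\overline{N}$ (as a sum, not necessarily direct) the limit of $M_n$ exists — it is the identity on the first summand and zero on the second — and (iii)$\Leftrightarrow$(iv) is formal since $M_n$-annihilation passes to the closure. For (ii)$\Rightarrow$(iv) (equivalently (iii)): if $\lim_n M_n(a) =: E(a)$ exists for all $a$, then $a - E(a) = \lim_n M_n(a - E(a))$ and one writes $a - M_n(a)$ as an element of $N$ (an average of elements $a-\alpha_g(a)$, approximable in norm by Riemann-type sums of coboundaries — here one uses separability/measurability to approximate the Bochner integral by simple functions, so $a-M_n(a)\in\overline{N}$), giving $a - E(a)\in\overline{N}$; also $E(a)\in A^\alpha$ since $M_n\circ\alpha_h$ and $M_n$ have the same limit by the Følner property, so $\alpha_h(E(a)) = E(a)$. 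Hence $a\in A^\alpha + \overline{N}$. The duality steps (iv)$\Leftrightarrow$(v) and (v)$\Leftrightarrow$(i) are Hahn–Banach arguments: an $\alpha$-invariant functional must annihilate $N$ hence $\overline N$, so it is determined by its restriction to $A^\alpha$ precisely when $A = A^\alpha + \overline N$; uniqueness of norm-preserving extension follows because any $\alpha$-invariant extension $\phi$ of $\psi\in(A^\alpha)^*$ has $\phi = \psi\circ E$ where $E(a) = \lim M_n(a)$, forcing $\|\phi\| \le \|\psi\|$ and hence equality. Statement (i) is the same thing for states (positive normalized functionals), and one reduces (v) to (i) by the standard decomposition of functionals into positive parts, or conversely builds (v) from (i) by noting that an invariant extension with minimal norm is a combination of invariant states.

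Finally, for (vi) and (vii): define $E(a) := \lim_n M_n(a)$, which exists by (ii). It lands in $A^\alpha$ by the Følner argument above, restricts to the identity on $A^\alpha$, is a contraction and unital, hence is a conditional expectation onto $A^\alpha$ by Tomiyama's theorem; it is $\alpha$-invariant because $M_n\circ\alpha_h$ and $M_n$ agree in the limit. This gives (vii) and existence in (vi). For uniqueness in (vi): any $\alpha$-invariant conditional expectation $E'$ onto $A^\alpha$ satisfies $E'(a-\alpha_g(a)) = E'(a)-E'(a) = 0$, so $E'$ vanishes on $N$, hence on $\overline N$ by continuity; combined with (iv), $E'$ is the identity on $A^\alpha$ and zero on $\overline N$, which determines it uniquely, and in particular $E' = E$.

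I expect the main obstacle to be the careful handling of the Bochner-integral manipulations at the level of full rigor — specifically, justifying that $a - M_n(a)$ genuinely lies in the norm-closure of the coboundary space (this requires approximating $g\mapsto\alpha_g(a)$ by simple functions, using $\rho$-measurability from the amenability hypothesis and the Følner sets having finite measure), and making the change-of-variables step (2.2) interact correctly with the symmetric difference estimate, including the point that we need $(\Lambda_n)$ to be \emph{both} a left and right Følner sequence (hence the unimodularity hypothesis) so that both $M_n\circ\alpha_h \approx M_n$ and the averaging of left-coboundaries behave well. The algebraic and Hahn–Banach parts are routine once the analytic lemma "$M_n$ is a contraction killing coboundaries in the limit and fixing $A^\alpha$" is in hand.
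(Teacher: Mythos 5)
Your overall architecture is the same as the paper's: the averaging operators $M_n(a)=\frac{1}{|\Lambda_n|}\int_{\Lambda_n}\alpha_g(a)\,dg$ as contractions fixing $A^\alpha$ and killing coboundaries in the limit (right F\o lner property), the left F\o lner property plus unimodularity to get $\alpha_h(E(a))=E(a)$, Hahn--Banach duality between density statements and invariant functionals, Jordan decomposition to pass between states and general functionals, and Tomiyama's theorem for (vi). Your uniqueness argument for (vi) (an invariant conditional expectation kills $N$, hence $\overline N$, and is then pinned down by (iv)) is a slight variant of the paper's, which instead averages $E'$ over the F\o lner sets; both work.

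Two of your claimed implications are not right as stated, though your own toolkit repairs them. First, ``(iii)$\Leftrightarrow$(iv) is formal'' overclaims: (iv)$\Rightarrow$(iii) is trivial, but (iii)$\Rightarrow$(iv) is a genuine upgrade from density of $A^\alpha+N$ to an exact decomposition $A=A^\alpha+\overline N$, and in both your argument and the paper's it is obtained only by first passing through (ii) to build $E$ and then showing $a-E(a)\in\overline N$ (your observation that $a-M_n(a)\in\overline N$, or equivalently that every functional annihilating $\overline N$ annihilates $a-M_n(a)$, is the right tool here). So state the chain as (iii)$\Rightarrow$(ii)$\Rightarrow$(iv)$\Rightarrow$(iii) rather than treating the equivalence as formal. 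Second, the direct reduction (i)$\Rightarrow$(v) ``by decomposition into positive parts'' has a gap on the existence side: Jordan decomposition gives an equal-norm invariant extension only for self-adjoint functionals, since for $f=f_1+if_2$ the extension $\mu_1+i\mu_2$ satisfies only $\|\mu\|\le\|f_1\|+\|f_2\|$, which can exceed $\|f\|$. The decomposition idea is exactly right, but it should be deployed as the paper does, to prove (i)$\Rightarrow$(iii): a bounded invariant functional vanishing on $A^\alpha$ has self-adjoint parts whose Jordan components are invariant, positive, and agree on $A^\alpha$, hence coincide by (i), so the functional is zero; Hahn--Banach then gives density of $A^\alpha+N$. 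Equal-norm existence in (v) for arbitrary bounded functionals then comes for free from $f\mapsto f\circ E$ once (ii) is in hand. With these two reroutings your cycle closes and the proof is complete.
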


\begin{proof}
We first discuss the implications that need the fewest assumptions. In the
first four implications below, we don't need amenability of $(A,\alpha)$, and
$G$ can be arbitrary.

(v)$\Rightarrow$(i): Let $\omega$ be any state on $A^{\alpha}$. Then there is
an $\alpha$-invariant linear functional $\mu$ on $A$, extending $\omega$, such
that $\left\|  \mu\right\|  =\left\|  \omega\right\|  =\omega(1)=\mu(1)$,
since $1\in A^{\alpha}$, therefore $\mu$ is positive and hence a state.

(i)$\Rightarrow$(iii): Suppose not, so (i) holds but there exists a $b\in
A\backslash Z$ where
\[
Z:=\overline{A^{\alpha}+\text{span}\left\{  a-\alpha_{g}(a):g\in G,a\in
A\right\}  }\text{.}
\]
Then by the Hahn-Banach theorem there exists a bounded linear functional
$f:A\rightarrow\mathbb{C}$ such that $f(b)\neq0$ while $f|_{Z}=0$. In
particular this tells us that $f$ is $\alpha$-invariant and $f|_{A^{\alpha}
}=0$.

Now consider the linear functionals $f_{1}$ and $f_{2}$ on $A$ defined by
$f_{1}(a):=f(a)+\overline{f(a^{\ast})}$ and $f_{2}(a):=[f(a)-\overline
{f(a^{\ast})}]/i$. Then either $f_{1}(b)\neq0$ or $f_{2}(b)\neq0$, since
$f_{1}+if_{2}=2f$. Furthermore $f_{1}|_{Z}=f_{2}|_{Z}=0$, since $Z$ is closed
under involution. So $f_{1}$ and $f_{2}$ are both $\alpha$-invariant and
$f_{1}|_{A^{\alpha}}=f_{2}|_{A^{\alpha}}=0$. Note that $f_{1}$ and $f_{2}$ are
both self-adjoint, meaning $\overline{f_{j}(a^{\ast})}=f_{j}(a)$, therefore we
have shown that without loss we can assume $f$ to be self-adjoint by replacing
it by $f_{1}$ or $f_{2}$, whichever is not zero at $b$.

Let $f=f_{+}-f_{-}$ be the Jordan decomposition of $f$, so $f_{+}$ and $f_{-}$
are both positive and $\left\|  f\right\|  =\left\|  f_{+}\right\|  +\left\|
f_{-}\right\|  $. Since $f_{+}$ and $f_{-}$ are unique, $\left\|  f_{\pm}%
\circ\alpha_{g}\right\|  =\left\|  f_{\pm}\right\|  $, and $\alpha_{g}$ is
positive, it follows from the $\alpha$-invariance of $f$ that both $f_{+}$ and
$f_{-}$ are $\alpha$-invariant. Since $f|_{Z}=0$, we have in particular that
$f_{+}|_{A^{\alpha}}=f_{-}|_{A^{\alpha}}$. One possibility is $f_{+}%
|_{A^{\alpha}}=f_{-}|_{A^{\alpha}}=0$, in which case $\left\|  f_{\pm
}\right\|  =f_{\pm}(1)=0$ by positivity, contradicting $f(b)\neq0$. The
remaining possibility is $f_{+}|_{A^{\alpha}}=f_{-}|_{A^{\alpha}}\neq0$, in
which case $\left\|  f_{\pm}\right\|  =f_{\pm}(1)=f_{\pm}|_{A^{\alpha}%
}(1)=\left\|  f_{\pm}|_{A^{\alpha}}\right\|  $ allowing us to normalize both
$f_{+}$ and $f_{-}$ with the same factor (namely $1/\left\|  f_{+}%
|_{A^{\alpha}}\right\|  =1/\left\|  f_{-}|_{A^{\alpha}}\right\|  $) to obtain
states $\omega_{+}$ and $\omega_{-}$ with $\omega_{+}|_{A^{\alpha}}=\omega
_{-}|_{A^{\alpha}}$. By (i) the latter means that $\omega_{+}=\omega_{-}$ and
therefore $f_{+}=f_{-}$ which gives $f=0$, again contradicting $f(b)\neq0$. We
conclude that no such $b$ exists.

(v) and (vi)$\Rightarrow$(iv): Take any $a\in A$, then $E(a)\in A^{\alpha}$
and $a-E(a)\in\ker E$, proving that $A=A^{\alpha}+\ker E$. To show (iv) we
therefore only need to obtain a contradiction if we suppose there exists a
$b\in\ker E\backslash Z$ where $Z:=\overline{\text{span}\left\{  a-\alpha
_{g}(a):g\in G,a\in A\right\}  }$. Applying the Hahn-Banach theorem, the
existence of such a $b$ means that there is a bounded linear functional $f$ on
$A$ such that $f(b)\neq0$ while $f|_{Z}=0$. In particular the latter says that
$f$ is $\alpha$-invariant. Using the same argument as above we can assume
without loss that $f$ is positive. Now, $f$ is a linear extension of
$f|_{A^{\alpha}}$ and by positivity they have the same norm, since $\left\|
f\right\|  =f(1)=f|_{A^{\alpha}}(1)=\left\|  f|_{A^{\alpha}}\right\|  $.
However $f|_{A^{\alpha}}\circ E=$ $f\circ E$ is also such an extension of
$f|_{A^{\alpha}}$, therefore by the uniqueness in (v) we have $f\circ E=f$.
This means that $f(b)=f(E(b))=0$, since $b\in\ker E$, contradicting
$f(b)\neq0$.

(iv)$\Rightarrow$(iii): This is immediate.

(iii)$\Rightarrow$(ii): Now we use the fact that $\left(  A,\alpha\right)  $
is amenable and $\left(  \Lambda_{n}\right)  $ is a right F\o lner sequence.
For any $a\in A^{\alpha}$ the limit in (ii) is just $a$ by simple properties
of the Bochner integral. Secondly, for an arbitrary $a\in A$ and $h\in G$ we
have, using properties of the Bochner integral, that%
\begin{align*}
&  \left\|  \frac{1}{\left|  \Lambda_{n}\right|  }\int_{\Lambda_{n}}\alpha
_{g}\left(  a-\alpha_{h}(a)\right)  dg\right\| \\
&  =\left\|  \frac{1}{\left|  \Lambda_{n}\right|  }\int_{\Lambda_{n}}
\alpha_{g}\left(  a\right)  dg-\frac{1}{\left|  \Lambda_{n}\right|  }
\int_{\Lambda_{n}h}\alpha_{g}\left(  a\right)  dg\right\| \\
&  \leq\left\|  \frac{1}{\left|  \Lambda_{n}\right|  }\int_{\Lambda
_{n}\backslash\left(  \Lambda_{n}\cap\left(  \Lambda_{n}h\right)  \right)
}\alpha_{g}\left(  a\right)  dg\right\|  +\left\|  \frac{1}{\left|
\Lambda_{n}\right|  }\int_{\left(  \Lambda_{n}h\right)  \backslash\left(
\Lambda_{n}\cap\left(  \Lambda_{n}h\right)  \right)  }\alpha_{g}\left(
a\right)  dg\right\| \\
&  \leq\frac{1}{\left|  \Lambda_{n}\right|  }\left|  \Lambda_{n}
\bigtriangleup\left(  \Lambda_{n}h\right)  \right|  \left\|  a\right\|
\end{align*}
which tends to zero as $n\rightarrow\infty$. Combining these two facts, we
have shown that the limit in (ii) converges for all $a$ in $A^{\alpha} +$
span$\left\{  a-\alpha_{g}(a):g\in G,a\in A\right\}  $, and since the latter
is dense in $A$ a simple argument shows that (ii) holds for all $a\in A$.

(ii)$\Rightarrow$(v), (vi) and (vii): From now on we need all the assumptions
in the theorem. Since the limit in (ii) exists, it gives a well-defined
function $E:A\rightarrow A$ by the formula in (vii), which is clearly linear.
Using a standard property of the Bochner integral \cite[Theorem II.2.4(ii)]%
{DU} we have
\[
\left\|  E(a)\right\|  \leq\lim_{n\rightarrow\infty}\frac{1}{\left|
\Lambda_{n}\right|  }\int_{\Lambda_{n}}\left\|  a\right\|  dg=\left\|
a\right\|
\]
for all $a\in A$, so $\left\|  E\right\|  \leq1$. Since $E(1)=1$, we in fact
have $\left\|  E\right\|  =1$. Note that
\[
E(\alpha_{h}(a))=\lim_{n\rightarrow\infty}\frac{1}{\left|  \Lambda_{n}\right|
}\int_{\Lambda_{n}h}\alpha_{g}(a)dg
\]
but as above
\[
\lim_{n\rightarrow\infty}\left(  \frac{1}{\left|  \Lambda_{n}\right|  }
\int_{\Lambda_{n}}\alpha_{g}(a)dg-\frac{1}{\left|  \Lambda_{n}\right|  }
\int_{\Lambda_{n}h}\alpha_{g}(a)dg\right)  =0
\]
which means that $E(\alpha_{h}(a))=E(a)$, i.e. $E$ is $\alpha$-invariant. From
the definition of $E$ as the limit in (ii) we have $A^{\alpha}\subset E(A)$.
But using the properties of Bochner integrals (see in particular \cite[Theorem
II.2.6]{DU}) and the fact that $G$ is unimodular and $\left(  \Lambda
_{n}\right)  $ is also a left F\o lner sequence, we have
\begin{align*}
\alpha_{h}(E(a))  &  =\lim_{n\rightarrow\infty}\frac{1}{\left|  \Lambda
_{n}\right|  }\int_{\Lambda_{n}}\alpha_{hg}(a)dg\\
&  =\lim_{n\rightarrow\infty}\frac{1}{\left|  \Lambda_{n}\right|  }
\int_{h\Lambda_{n}}\alpha_{g}(a)dg\\
&  =\lim_{n\rightarrow\infty}\frac{1}{\left|  \Lambda_{n}\right|  }
\int_{\Lambda_{n}}\alpha_{g}(a)dg\\
&  =E(a)
\end{align*}
noting that $g\mapsto\alpha_{hg}(a)$ is indeed $\rho$-measurable on compact
subsets of $G$ by Pettis's measurability theorem since $g\mapsto\alpha_{g}(a)$
is, and for any bounded linear functional $f$ on $A$ one has $f(\alpha
_{hg}(a))=(f\circ\alpha_{h})(\alpha_{g}(a))$ with $f\circ\alpha_{h}$ a bounded
linear functional on $A$. This implies that $E(A)\subset A^{\alpha}$. It
follows that $E$ is a norm 1 projection of $A$ onto $A^{\alpha}$. Furthermore
\[
A^{\alpha}=\bigcap_{g\in G}\left(  \alpha_{g}-\text{ id}_{A}\right)  ^{-1}(0)
\]
so $A^{\alpha}$ is closed and therefore a C*-subalgebra of $A$. Therefore $E$
is a conditional expectation by a result of Tomiyama \cite{T} (also see
\cite[Section 9.1]{St}).

Supposing $E^{\prime}$ is also a $\alpha$-invariant conditional expectation of
$A$ onto $A^{\alpha}$, we can again use the basic properties of Bochner
integrals to obtain
\begin{align*}
E^{\prime}(a)  &  =\lim_{n\rightarrow\infty}\frac{1}{\left|  \Lambda
_{n}\right|  }\int_{\Lambda_{n}}E^{\prime}(a)dg\\
&  =\lim_{n\rightarrow\infty}\frac{1}{\left|  \Lambda_{n}\right|  }
\int_{\Lambda_{n}}E^{\prime}\left(  \alpha_{g}(a)\right)  dg\\
&  =E^{\prime}\left(  \lim_{n\rightarrow\infty}\frac{1}{\left|  \Lambda
_{n}\right|  }\int_{\Lambda_{n}}\alpha_{g}(a)dg\right) \\
&  =E^{\prime}\left(  E(a)\right) \\
&  =E(a)
\end{align*}
for every $a\in A$, since $E(a)\in A^{\alpha}$, meaning $E$ is indeed unique.

It remains to show (v). Let $f$ be any bounded linear functional on
$A^{\alpha}$. Then $f\circ E$ is a bounded linear functional on $A$ with the
same norm, and it is $\alpha$-invariant since $E$ is. Supposing $f^{\prime}$
is also an $\alpha$-invariant bounded linear functional on $A$ extending $f$
and with the same norm as $f$, we have
\begin{align*}
f^{\prime}(a)  &  =\lim_{n\rightarrow\infty}\frac{1}{\left|  \Lambda
_{n}\right|  }\int_{\Lambda_{n}}f^{\prime}(a)dg\\
&  =\lim_{n\rightarrow\infty}\frac{1}{\left|  \Lambda_{n}\right|  }
\int_{\Lambda_{n}}f^{\prime}\left(  \alpha_{g}(a)\right)  dg\\
&  =f^{\prime}\left(  \lim_{n\rightarrow\infty}\frac{1}{\left|  \Lambda
_{n}\right|  }\int_{\Lambda_{n}}\alpha_{g}(a)dg\right) \\
&  =f^{\prime}\left(  E(a)\right) \\
&  =f\left(  E(a)\right)
\end{align*}
for all $a\in A$, showing that $f^{\prime}=f\circ E$. This proves the
uniqueness in (v).
\end{proof}

\begin{remark}
From the proof it is clear that various implications hold with less
assumptions, for example (i)$\Rightarrow$(iii) doesn't require $\left(
A,\alpha\right)  $ to amenable, and in particular $G$ can be arbitrary. Also
note that if $G$ is abelian, or if more generally we restrict ourselves to F\o
lner sequences that are both right and left F\o lner sequences, then the value
of the ergodic average in (ii) is independent of the F\o lner sequence because
of (vi) and (vii).
\end{remark}

The equivalence of conditions (i) and (ii) is what we are most interested in
now, since it is useful in showing relative unique ergodicity in the examples
below. These examples are simple variations on Propositions 4.2 and 4.3.

\begin{proposition}
Consider the situation in Example 2.4, write $A=A_{\theta}$ and set
$\alpha_{s}:=\tau_{s,0}$ for all $s\in\mathbb{R}$. Then $\left(
A,\alpha\right)  $ is an example of a C*-dynamical system for an action of
$\mathbb{R}$ which is uniquely ergodic relative to its fixed point algebra.
\end{proposition}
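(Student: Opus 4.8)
The plan is to deduce the statement from Theorem 5.2 by verifying its condition (ii) and then invoking the equivalence (i)$\Leftrightarrow$(ii). First I would check that the standing hypotheses of Theorem 5.2 are satisfied: the group $G=\mathbb{R}$ is abelian, hence unimodular, as well as $\sigma$-compact and locally compact; the sets $\Lambda_{n}:=[-n,n]$ form a symmetric F\o lner sequence of compact subsets of strictly positive Haar measure, so $\left(\Lambda_{n}\right)$ is simultaneously a right and a left F\o lner sequence; and $\left(A,\alpha\right)$ is amenable in the sense of Definition 2.2, because by Example 2.4 the map $\left(s,t\right)\mapsto\tau_{s,t}(a)$ is norm continuous for every $a\in A_{\theta}$, so its restriction $s\mapsto\alpha_{s}(a)=\tau_{s,0}(a)$ is norm continuous, hence $\rho$-measurable on every compact subset of $\mathbb{R}$.

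Next I would establish condition (ii), i.e.\ that the averages $M_{n}(a):=\frac{1}{\left|\Lambda_{n}\right|}\int_{\Lambda_{n}}\alpha_{s}(a)\,ds$ converge in norm for each $a\in A$. Since $\left\|\alpha_{s}(a)\right\|=\left\|a\right\|$, every $M_{n}$ is a contraction, so by a routine $3\varepsilon$-argument it suffices to prove convergence on the dense $\ast$-subalgebra $B\subset A_{\theta}$ of finite linear combinations of the monomials $u^{j}v^{k}$ with $j,k\in\mathbb{Z}$ (dense as in the proof of Proposition 4.2), and by linearity it is enough to treat one such monomial. Here (2.4) gives $\alpha_{s}(u^{j}v^{k})=\left(e^{2\pi is}u\right)^{j}v^{k}=e^{2\pi ijs}u^{j}v^{k}$, whence
\[
M_{n}(u^{j}v^{k})=\left(\frac{1}{2n}\int_{-n}^{n}e^{2\pi ijs}\,ds\right)u^{j}v^{k},
\]
and the scalar in parentheses equals $1$ when $j=0$ and tends to $0$ as $n\to\infty$ when $j\neq0$. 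Thus $M_{n}(u^{j}v^{k})$ converges in norm, to $v^{k}$ if $j=0$ and to $0$ otherwise, which proves (ii).

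By Theorem 5.2, (ii) implies (i), so $\left(A,\alpha\right)$ is uniquely ergodic relative to its fixed point algebra. As a byproduct the computation above shows that $A^{\alpha}$ is precisely the C*-subalgebra $C^{\ast}(v)$ generated by $v$, and that the unique $\alpha$-invariant conditional expectation of $A$ onto $A^{\alpha}$ provided by Theorem 5.2(vi)--(vii) coincides with the map $D_{1}$ of Example 4.6. One could instead argue directly in the style of Propositions 4.2 and 4.3, since any $\alpha$-invariant extension to $A$ of a state on $A^{\alpha}$ must vanish on $u^{j}v^{k}$ for $j\neq0$ and agree with the given state on $v^{k}\in A^{\alpha}$, hence is determined on the dense subalgebra $B$; existence then follows by taking a weak* cluster point of the averages of an arbitrary state extension over $\left(\Lambda_{n}\right)$. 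I do not anticipate a genuine obstacle: the only mildly delicate points are the verification that $\left(A,\alpha\right)$ is amenable and the passage from $B$ to all of $A_{\theta}$, both handled by the norm continuity and the contractivity of the $M_{n}$ noted above.
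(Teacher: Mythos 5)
Your proof is correct and follows essentially the same route as the paper: verify condition (ii) of Theorem 5.2 by computing the ergodic averages on the monomials $u^{j}v^{k}$ via (2.4), pass to all of $A$ by density and contractivity, and invoke the equivalence (i)$\Leftrightarrow$(ii). You additionally spell out the standing hypotheses (amenability, unimodularity, the two-sided F\o lner property) and identify $A^{\alpha}=C^{\ast}(v)$, which the paper relegates to Remark 5.5, but the argument itself is the same.
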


\begin{proof}
Take any F\o lner sequence $\left(  \Lambda_{n}\right)  $ in $\mathbb{R}$, for
example $\Lambda_{n}=\left[  0,n\right]  $. Let $a=u^{j}v^{k}$ for any
$j,k\in\mathbb{Z}$. Then
\begin{align*}
\lim_{n\rightarrow\infty}\frac{1}{\left|  \Lambda_{n}\right|  }\int
_{\Lambda_{n}}\alpha_{s}(a)ds  &  =\lim_{n\rightarrow\infty}\frac{1}{\left|
\Lambda_{n}\right|  }\int_{\Lambda_{n}}e^{2\pi ijs}ads\\
&  =a\lim_{n\rightarrow\infty}\frac{1}{\left|  \Lambda_{n}\right|  }
\int_{\Lambda_{n}}e^{2\pi ijs}ds\\
&  =\left\{
\begin{array}
[c]{l}%
a\text{ if }j=0\\
0\text{ otherwise}%
\end{array}
\right.
\end{align*}
and therefore $\lim_{n\rightarrow\infty}\frac{1}{\left|  \Lambda_{n}\right|
}\int_{\Lambda_{n}}\alpha_{s}(a)ds$ exists for all $a$ in the dense $\ast
$-subalgebra of $A$ generated by $\left\{  u,v\right\}  $. Then it is easily
verified that it exists for all $a\in A$. According to Theorem 5.2(i) and (ii)
this means that $\left(  A,\alpha\right)  $ is indeed uniquely ergodic
relative to its fixed point algebra.
\end{proof}

\begin{remark}
With some more work one can show that the fixed point algebra of the
C*-dynamical system in Proposition 5.4 is the C*-subalgebra of $A$ generated
by $v$, as one would expect. One way of doing this is to use the conditional
expectation $D_1$ (for $\theta_1=\theta$) mentioned in Example 4.6; 
this is exactly the conditional expectation also given by Theorem 5.2(vi) 
for the system in Proposition 5.4.
\end{remark}

\begin{proposition}
Consider the situation in Proposition 4.3 except that $p/c$ and $q/d$ need not
be irrational. Write $\alpha_{s}\equiv\alpha_{s,0}$ and $\beta_{s}\equiv
\beta_{s,0}$ for all $s\in\mathbb{R}$, to define actions $\alpha$ and $\beta$
of $\mathbb{R}$ on $A$ and $B$ respectively. Then $\left(  A\otimes
_{m}B,\alpha\otimes_{m}\beta\right)  $ is an example of a C*-dynamical system
for an action of $\mathbb{R}$ which is uniquely ergodic relative to its fixed
point algebra.
\end{proposition}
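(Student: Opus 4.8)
The plan is to apply Theorem 5.2 to the product system $\left(A\otimes_m B,\alpha\otimes_m\beta\right)$ with $G=\mathbb{R}$, using the equivalence of conditions (i) and (ii). First I would check that the hypotheses of Theorem 5.2 are met: $\mathbb{R}$ is amenable, $\sigma$-compact, locally compact and unimodular, and $\Lambda_n=\left[0,n\right]$ is both a right and a left F\o lner sequence since $\mathbb{R}$ is abelian. By Example 2.4 the maps $s\mapsto\alpha_s(a)=\tau_{ps,0}(a)$ and $s\mapsto\beta_s(b)=\tau_{cs,0}(b)$ are norm continuous, so by Remark 4.4 the product dynamics $s\mapsto\left(\alpha_s\otimes_m\beta_s\right)(c)$ is norm continuous for every $c\in A\otimes_m B$, whence the product system is amenable. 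It therefore suffices, by Theorem 5.2, to show that $\lim_{n\to\infty}\frac{1}{\left|\Lambda_n\right|}\int_{\Lambda_n}\left(\alpha_g\otimes_m\beta_g\right)(c)\,dg$ exists in norm for every $c\in A\otimes_m B$.

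Following the proof of Proposition 5.4, I would verify this first on the dense $\ast$-subalgebra $C$ of $A\otimes_m B$ spanned by the elements $\dot u^j\dot v^k\dot w^l\dot z^m$ with $j,k,l,m\in\mathbb{Z}$, in the notation of the proof of Proposition 4.3. From (2.4) one gets $\alpha_s(u)=e^{2\pi ips}u$, $\alpha_s(v)=v$, $\beta_s(w)=e^{2\pi ics}w$ and $\beta_s(z)=z$, so
\[
\left(\alpha_s\otimes_m\beta_s\right)\left(\dot u^j\dot v^k\dot w^l\dot z^m\right)=e^{2\pi i\left(jp+lc\right)s}\,\dot u^j\dot v^k\dot w^l\dot z^m ,
\]
and consequently the ergodic average of $\dot u^j\dot v^k\dot w^l\dot z^m$ equals $\left(\frac{1}{\left|\Lambda_n\right|}\int_{\Lambda_n}e^{2\pi i\left(jp+lc\right)s}\,ds\right)\dot u^j\dot v^k\dot w^l\dot z^m$, whose limit exists (it is $\dot u^j\dot v^k\dot w^l\dot z^m$ when $jp+lc=0$ and $0$ otherwise). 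Hence the limit exists for every $c\in C$.

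The passage from $C$ to all of $A\otimes_m B$ is the routine uniform-boundedness argument: the operators $M_n\colon c\mapsto\frac{1}{\left|\Lambda_n\right|}\int_{\Lambda_n}\left(\alpha_g\otimes_m\beta_g\right)(c)\,dg$ are linear contractions on $A\otimes_m B$ by \cite[Theorem II.2.4(ii)]{DU} (since $\left\|\left(\alpha_g\otimes_m\beta_g\right)(c)\right\|=\left\|c\right\|$), so norm convergence of $\left(M_n c\right)$ on the dense set $C$ together with completeness of $A\otimes_m B$ forces norm convergence of $\left(M_n c\right)$ for every $c$. By Theorem 5.2(ii)$\Rightarrow$(i) this yields the claimed relative unique ergodicity.

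I do not expect any genuine obstacle: the only thing requiring care is confirming that the product system falls under the scope of Theorem 5.2 (amenability and the measurability of $g\mapsto(\alpha_g\otimes_m\beta_g)(c)$), which is supplied by Remark 4.4 and the norm continuity of $\tau$ recorded in Example 2.4. It is worth flagging, though, that this really is the \emph{relative} statement and not plain unique ergodicity: when $jp+lc=0$ admits a nonzero integer solution — for instance whenever $p/c$ is rational — the fixed point algebra strictly contains $\mathbb{C}1$, which is precisely why unique ergodicity as in Proposition 4.3 can fail here and the relative version is the correct formulation.
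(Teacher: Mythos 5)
Your proposal is correct and follows essentially the same route as the paper: verify condition (ii) of Theorem 5.2 on the dense $\ast$-subalgebra spanned by the monomials $\dot{u}^{j}\dot{v}^{k}\dot{w}^{l}\dot{z}^{m}$, where the average reduces to $\frac{1}{\left|\Lambda_{n}\right|}\int_{\Lambda_{n}}e^{2\pi i\left(jp+lc\right)s}ds$ times the monomial, and then extend by a contraction-plus-density argument exactly as in the proof of Proposition 5.4. Your explicit checks of amenability via Remark 4.4 and of the uniform boundedness of the averaging operators are details the paper leaves implicit, but the argument is the same.
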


\begin{proof}
Using the notation in the proof of Proposition 4.3 and setting $a=\dot{u}%
^{j}\dot{v}^{k}\dot{w}^{l}\dot{z}^{m}$ for any $j,k,l,m\in\mathbb{Z}$, we
have
\begin{align*}
\lim_{n\rightarrow\infty}\frac{1}{\left|  \Lambda_{n}\right|  }\int
_{\Lambda_{n}}\left(  \alpha\otimes_{m}\beta\right)  _{s}(a)ds  &
=\lim_{n\rightarrow\infty}\frac{1}{\left|  \Lambda_{n}\right|  }\int
_{\Lambda_{n}}e^{2\pi i\left(  jp+lc\right)  s}ads\\
&  =\left\{
\begin{array}
[c]{l}%
a\text{ if }jp+lc=0\\
0\text{ otherwise}%
\end{array}
\right.
\end{align*}
and proceeding as in Proposition 5.4's proof, the result follows.
\end{proof}

As in Section 4, in the case where $p/c$ is irrational, the C*-dynamical
system $\left(  A\otimes_{m}B,\alpha\otimes_{m}\beta\right)  $ in Proposition
5.6 is not periodic.

Using higher dimensional quantum tori one should similarly be able to
construct C*-dynamical systems for actions of $\mathbb{R}^{n}$ which are
uniquely ergodic relative to their fixed point algebras.

Finally we return to disjointness. In Section 3 we focussed on state
preserving C*-dynamical systems, that is to say, we considered a specific
invariant state. Now however we in principle allow many states, so let us
consider an appropriate version of disjointness for relatively uniquely
ergodic systems. In line with relative unique ergodicity, the focus here will
be on fixed point algebras, rather than more general factors as in the
W*-dynamical case in Section 3.

Note that if $\left(  A,\alpha\right)  $ and $\left(  B,\beta\right)  $ are
C*-dynamical systems which are uniquely ergodic relative to their respective
fixed point algebras, then any state $\psi$ on the algebraic tensor product
$A^{\alpha}\odot B^{\beta}$ induces a unique pair of invariant states $\mu$
and $\nu$ of $\left(  A,\alpha\right)  $ and $\left(  B,\beta\right)  $
respectively through the following process: $\psi|_{A^{\alpha}\odot1}$ gives a
state on $A^{\alpha}$, which has a unique extension to an invariant state
$\mu$ of $\left(  A,\alpha\right)  $, and similarly for $\nu$.

\begin{definition}
Let the C*-dynamical systems $\left(  A,\alpha\right)  $ and $\left(
B,\beta\right)  $ be uniquely ergodic relative to their respective fixed point
algebras. Let $R$ be the closure of $A^{\alpha}\odot B^{\beta}$ in
$A\otimes_{m}B$. We call $\left(  A,\alpha\right)  $ and $\left(
B,\beta\right)  $ \emph{disjoint relative to their fixed point algebras} if
every state $\psi$ on $R$ has a unique extension to a joining of $\left(
A,\alpha,\mu\right)  $ and $\left(  B,\beta,\nu\right)  $, where $\mu$ and
$\nu$ are the states induced by $\psi$ on $A$ and $B$ respectively.
\end{definition}

Note that the disjointness defined in Definition 5.7, implies that for every
state $\psi$ on $R$, the induced state preserving C*-dynamical systems
$\left(  A,\alpha,\mu\right)  $ and $\left(  B,\beta,\nu\right)  $ are
disjoint with respect to $\left(  R,\text{id},\psi\right)  $, in the sense of
Definition 3.1.

As in Section 4, the connection with relative unique ergodicity we want to
exploit, is quite simple: Let the C*-dynamical systems $\left(  A,\alpha
\right)  $ and $\left(  B,\beta\right)  $ be uniquely ergodic relative to
their respective fixed point algebras. Keep in mind that since $A^{\alpha
}\odot B^{\beta}$ is in the fixed point algebra of $\left(  A\otimes
_{m}B,\alpha\otimes_{m}\beta\right)  $, so is $R$. If $\left(  A\otimes
_{m}B,\alpha\otimes_{m}\beta\right)  $ is uniquely ergodic relative to $R$ (so
in particular $R$ is equal to the fixed point algebra of $\left(  A\otimes
_{m}B,\alpha\otimes_{m}\beta\right)  $), then $\left(  A,\alpha\right)  $ and
$\left(  B,\beta\right)  $ are disjoint relative to their fixed point
algebras. This is the case, since any state $\psi$ on $R$ has a unique
extension to an invariant state of $\left(  A\otimes_{m}B,\alpha\otimes
_{m}\beta\right)  $, which by the relative unique ergodicity of $\left(
A,\alpha\right)  $ and $\left(  B,\beta\right)  $ is then necessarily the
unique joining of $\left(  A,\alpha,\mu\right)  $ and $\left(  B,\beta
,\nu\right)  $ as in Definition 5.7. In general though, $R$ will not be the
fixed point algebra, so this is something we need to check.

\begin{corollary}
Consider the situation in Proposition 5.6, and assume furthermore that $p/c$
is irrational, and that $\theta_{1}$ or $\theta_{2}$ are irrational. Then
$\left(  A,\alpha\right)  $ and $\left(  B,\beta\right)  $ are disjoint
relative to their fixed point algebras.
\end{corollary}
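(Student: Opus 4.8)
The plan is to verify the sufficient condition isolated in the paragraph just before the corollary: that $\left(A\otimes_{m}B,\alpha\otimes_{m}\beta\right)$ is uniquely ergodic relative to $R$, the closure of $A^{\alpha}\odot B^{\beta}$ in $A\otimes_{m}B$. By Proposition 5.6 this product system is uniquely ergodic relative to its fixed point algebra, say $P$, so the whole statement reduces to checking $R=P$; once that holds, the disjointness of $\left(A,\alpha\right)$ and $\left(B,\beta\right)$ relative to their fixed point algebras is exactly the implication recorded there.

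The inclusion $R\subseteq P$ is immediate, since $A^{\alpha}=C^{\ast}(v)$ and $B^{\beta}=C^{\ast}(z)$ (cf. Remark 5.5), so $A^{\alpha}\odot B^{\beta}$ consists of $\alpha\otimes_{m}\beta$-fixed points and $P$ is closed. For $P\subseteq R$ I would use Theorem 5.2: the product system is amenable (Remark 4.4 with the norm continuity from Example 2.4), $G=\mathbb{R}$ is unimodular, and $\Lambda_{n}=\left[-n,n\right]$ is both a right and a left F\o lner sequence, so parts (vi) and (vii) furnish the $\alpha\otimes_{m}\beta$-invariant conditional expectation $E:A\otimes_{m}B\rightarrow P$ given by the ergodic average. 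On the dense $\ast$-subalgebra spanned by the monomials $\dot{u}^{j}\dot{v}^{k}\dot{w}^{l}\dot{z}^{m}$, equation (2.4) shows $\left(\alpha\otimes_{m}\beta\right)_{s}$ multiplies such a monomial by $e^{2\pi i\left(jp+lc\right)s}$, so the average returns the monomial when $jp+lc=0$ and $0$ otherwise; irrationality of $p/c$ forces $j=l=0$ in the former case, so $E$ maps this dense subalgebra into $\mathrm{span}\{\dot{v}^{k}\dot{z}^{m}:k,m\in\mathbb{Z}\}\subseteq R$. Continuity of $E$ then gives $P=E\left(A\otimes_{m}B\right)\subseteq R$, hence $R=P$.

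The irrationality of $p/c$ is exactly what this computation needs; the hypothesis that $\theta_{1}$ or $\theta_{2}$ be irrational makes one of $A,B$, and therefore $A\otimes_{m}B$, nuclear (Example 2.4), which is presumably wanted for the subsequent use of this example to illustrate Theorem 3.3 and which, in any case, makes $R$ genuinely the (unique) C*-tensor product $C^{\ast}(v)\otimes C^{\ast}(z)$, so that ``state on $R$'' in Definition 5.7 is unambiguous. With $R=P$ established, the rest is the bookkeeping already sketched before the corollary: a state $\psi$ on $R=P$ has a unique $\alpha\otimes_{m}\beta$-invariant extension $\omega$ to $A\otimes_{m}B$; its marginals $a\mapsto\omega\left(a\otimes1\right)$ and $b\mapsto\omega\left(1\otimes b\right)$ are invariant states of $\left(A,\alpha\right)$ and $\left(B,\beta\right)$ restricting on the fixed point algebras to the states $\mu,\nu$ induced by $\psi$, hence — since $\left(A,\alpha\right)$ and $\left(B,\beta\right)$ are themselves uniquely ergodic relative to their fixed point algebras (as in Proposition 5.4, or by restricting Proposition 5.6 to elementary tensors $a\otimes1$) — they coincide with $\mu,\nu$, so $\omega$ is a joining; and any joining extending $\psi$ is an invariant extension of $\psi$ and so equals $\omega$, which is Definition 5.7. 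The one genuine obstacle is the identification $R=P$, i.e. excluding $\alpha\otimes_{m}\beta$-fixed elements of $A\otimes_{m}B$ lying outside $\overline{\mathrm{span}\{\dot{v}^{k}\dot{z}^{m}\}}$, and the averaging conditional expectation $E$ is precisely what handles that.
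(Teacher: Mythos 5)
Your proof is correct, but the key step --- identifying the fixed point algebra $P$ of $\left(A\otimes_{m}B,\alpha\otimes_{m}\beta\right)$ with $R=\overline{A^{\alpha}\odot B^{\beta}}$ --- is handled by a genuinely different argument from the paper's. The paper works spatially: it uses the irrationality of $\theta_{1}$ or $\theta_{2}$ to get nuclearity, so that $A\otimes_{m}B$ is the spatial tensor product acting on $H\otimes H$ with $\Omega\otimes\Omega$ separating; it then computes the fixed point space of the unitary group $V$ by harmonic analysis, forms the spatial tensor product $E=D_{1}\otimes D_{2}$ of the explicit conditional expectations from Example 4.6, shows $\left[(1-E)(A\otimes B)\right](\Omega\otimes\Omega)\perp\left(H\otimes H\right)^{V}$, and concludes via the separating vector that every $\alpha\otimes\beta$-fixed $a$ satisfies $(1-E)a=0$. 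You instead invoke the abstract averaging conditional expectation onto $P$ supplied by Theorem 5.2(vi)--(vii) (legitimately available, since Proposition 5.6 gives condition (ii) and Remark 4.4 gives amenability of the product system), evaluate it on the dense span of monomials $\dot{u}^{j}\dot{v}^{k}\dot{w}^{l}\dot{z}^{m}$ where irrationality of $p/c$ visibly sends it into $\mathrm{span}\{\dot{v}^{k}\dot{z}^{m}\}\subseteq R$, and finish by continuity and closedness of $R$. Your route is more self-contained and, notably, never uses nuclearity: the irrationality of $\theta_{1}$ or $\theta_{2}$ plays no role in your argument, whereas the paper's separating-vector argument genuinely needs it (it is also wanted for the subsequent Example 5.9). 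Your side remark that nuclearity is needed to make ``state on $R$'' unambiguous is not quite right --- $R$ is by definition a closure inside $A\otimes_{m}B$, so it is a well-defined C*-algebra in any case --- but this does not affect the proof. The concluding bookkeeping (unique invariant extension of $\psi$ equals the unique joining) matches the paper's paragraph preceding the corollary.
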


\begin{proof}
The irrationality of $\theta_{1}$ or $\theta_{2}$ ensures that either $A$ or
$B$ are nuclear, so $A\otimes_{m}B=A\otimes B$ is the spatial tensor product,
which allows us to use the spatial representation as will be seen below. The
key technical point is to show that the fixed point algebra of $\left(
A\otimes B,\alpha\otimes\beta\right)  $, is indeed $A^{\alpha}\otimes
B^{\beta}$. We again use the notation in the proof of Proposition 4.3, and
$H=L^{2}\left(  \mathbb{T}^{2}\right)  $.

Denote the unitary group representing $\alpha\otimes\beta$ on $H\otimes H$ by
$V$. So $V_{s}f=f\circ\left(  T_{ps,0}\times T_{cs,0}\right)  $ in terms of
Example 2.4. Using harmonic analysis on $\mathbb{T}^{2}$ and the fact that
$p/c$ is irrational, one can show that the fixed point space $\left(  H\otimes
H\right)  ^{V}$ of $V$ is the Hilbert subspace of $H\otimes H$ spanned by
$\left\{  e_{0,k}\otimes e_{0,m}:k,m\in\mathbb{Z}\right\}  $, where $e_{j,k}$
denote the functions in the total orthonormal set of $H$ given by
$e_{j,k}\left(  s,t\right)  :=e^{2\pi ijs}e^{2\pi ikt}$.

Furthermore, taking the spatial tensor product of the conditional expectations
$D_{1}$ and $D_{2}$ described in Example 4.6, we obtain the conditional
expectation $E:A\otimes B\rightarrow C^{\ast}(v)\otimes C^{\ast}(z)$. One can
then show that $\left[  (1-E)\left(  A\otimes B\right)  \right]
(\Omega\otimes\Omega)$ is orthogonal to $\left(  H\otimes H\right)  ^{V}$.
Since $\Omega\otimes\Omega$ is separating for $A\otimes B$ on $H\otimes H$, it
follows that the only fixed point of $(1-E)\left(  A\otimes B\right)  $ under
$\alpha\otimes\beta$ is $0$. For any $a\in\left(  A\otimes B\right)
^{\alpha\otimes\beta}$ we then have $a=\left(  \alpha\otimes\beta\right)
_{g}(a)=\left(  \alpha\otimes\beta\right)  _{g}(Ea+(1-E)a)=Ea+\left(
\alpha\otimes\beta\right)  _{g}((1-E)a)$, since $C^{\ast}(v)\otimes C^{\ast
}(z)\subset\left(  A\otimes B\right)  ^{\alpha\otimes\beta}$. So $(1-E)a$ is a
fixed point of $\alpha\otimes\beta$, which means that $(1-E)a=0$. Therefore
$a=Ea\in C^{\ast}(v)\otimes C^{\ast}(z)$, so $\left(  A\otimes B\right)
^{\alpha\otimes\beta}=C^{\ast}(v)\otimes C^{\ast}(z)$.

As mentioned above, $C^{\ast}(v)$ and $C^{\ast}(z)$ are the respective fixed
algebras of $\left(  A,\alpha\right)  $ and $\left(  B,\beta\right)  $. By
Proposition 5.6 we now know that $\left(  A,\alpha\right)  $ and $\left(
B,\beta\right)  $ are disjoint relative to their fixed point algebras.
\end{proof}

In order to illustrate Theorem 3.3 by an example satisfying all its
assumptions, we however also need a nontrivial coupling. In the next example
we consider a case of Corollary 5.8 where we are able to construct such a coupling.

\begin{example}
Again we use ideas from Section 3 and the notation in Example 2.4. Write
$A=A_{\theta}$, but with $\theta$ irrational to ensure $A$ is nuclear, and let
$\mu$ be its canonical trace. From the modular conjugation $J$ of 
$\left(A^{\prime\prime},\mu\right)$, we obtain an idempotent map $j$ as in 
Section 3. Setting
\[
\tilde{u}:=j(u)\text{ and }\tilde{v}:=j(v)
\]
we define $\tilde{A}$ to be the C*-subalgebra of $B(H)$ generated by $\left\{
\tilde{u},\tilde{v}\right\}  $, and we naturally carry $\mu$ to the state
$\tilde{\mu}$ on $\tilde{A}$ given by $\tilde{\mu}:=\mu\circ j$, which turns
out to be exactly $\tilde{\mu}=\left\langle \Omega,\left(  \cdot\right)
\Omega\right\rangle $. So $A$ and $\tilde{A}$ are mutually commuting, allowing
us as in Section 3 to define a unital $\ast$-homomorphism $\delta
:A\otimes\tilde{A}\rightarrow B(H)$ by extending $\delta\left(  a\otimes
b\right)  :=ab$, which gives the coupling $\kappa$ of $\left(  A,\mu\right)  $
and $\left(  \tilde{A},\tilde{\mu}\right)  $ defined by
\[
\kappa(a):=\left\langle \Omega,\delta(a)\Omega\right\rangle
\]
for all $a\in A\otimes\tilde{A}$.

This abstract approach actually has a simple concrete meaning. It is easily
verified that $J$ is just complex conjugation on $H=L^{2}\left(
\mathbb{T}^{2}\right)  $, and that $\tilde{u}$ and $\tilde{v}$ are the
generators of $A_{-\theta}$, so they are given by precisely the same formulas
as $u$ and $v$ respectively in Example 2.4, but with $\theta$ replaced by
$-\theta$.

So we simply apply Corollary 5.8 to the case $\theta_{1}=\theta=-\theta_{2}$
to obtain disjoint $\left(  A,\alpha\right)  $ and $\left(  B,\beta\right)  $,
assuming $p/c$ is irrational. In terms of the notation in Corollary 5.8's
proof, we can define a state $\omega$ on $A\otimes B$ by
\[
\omega(a):=\left\langle \Omega,\delta\circ E(a)\Omega\right\rangle
\]
for all $a\in A\otimes B$. Setting $R:=C^{\ast}(v)\otimes C^{\ast}(z)$ and
$\psi:=\omega|_{R}$, we have $\kappa|_{R}=\psi$, and by the above mentioned
disjointness, $\omega$ is easily checked to be the unique joining of 
$\left(A,\alpha,\mu\right)$ and $\left(B,\beta,\nu\right)$ extending $\psi$, 
where $\mu$ and $\nu$ are the canonical traces of $A$ and $B$ as usual.

Keeping in mind Remark 4.4, we therefore satisfy all the assumptions in
Theorem 3.3 in a nontrivial way, since $\kappa\neq\omega$, for example
$\kappa\left(  u\otimes\tilde{u}^{-1}\right)  =1$ while $\omega\left(
u\otimes\tilde{u}^{-1}\right)  =0$.
\end{example}

\section*{Acknowledgments}

We thank Joe Diestel for very helpful discussions regarding the Bochner
integral. This work was supported by the National Research Foundation of South Africa.

\end{document}